\documentclass[hidelinks,onefignum,onetabnum]{siamart251216}

\usepackage{lipsum}
\usepackage{amsfonts}
\usepackage{graphicx}
\usepackage{epstopdf}
\usepackage{algorithmic}
\ifpdf
  \DeclareGraphicsExtensions{.eps,.pdf,.png,.jpg}
\else
  \DeclareGraphicsExtensions{.eps}
\fi

\newsiamremark{remark}{Remark}
\newsiamremark{hypothesis}{Hypothesis}
\crefname{hypothesis}{Hypothesis}{Hypotheses}
\newsiamthm{claim}{Claim}
\newsiamremark{fact}{Fact}
\crefname{fact}{Fact}{Facts}

\headers{Input-to-state stability of chemical reaction networks}{R. Jiang, X. Zhang, C. Gao, D. Dochain}

\title{Input-to-state stability of chemical reaction networks with application to molecular computation\thanks{Submitted to the editors DATE.
\funding{This work was funded by the National Natural Science Foundation of China under Grant No. 12320101001, and the National Foreign Expert Project of China under Grant No. S20250211.}}}

\author{Renlei Jiang\thanks{School of Mathematical Sciences, Zhejiang University, Hangzhou, China (\email{jiangrl@zju.edu.cn}).} \and Xiaoyu Zhang\thanks{School of Mathematics, Southeast University, Nanjing, China (\email{Xiaoyu\_Z@seu.edu.cn}).} \and Chuanhou Gao\thanks{Corresponding author. School of Mathematical Sciences and Center for Interdisciplinary Applied Mathematics, Zhejiang University, Hangzhou, China (\email{gaochou@zju.edu.cn}).} \and Denis Dochain\thanks{ICTEAM, UCLouvain, B\^{a}timent Euler, avenue Georges Lema\^{i}tre 4-6, 1348 Louvain-la-Neuve, Belgium (\email{denis.dochain@uclouvain.be}).}
}

\usepackage{amsopn}

\ifpdf
\hypersetup{
  pdftitle={An Example Article},
  pdfauthor={D. Doe, P. T. Frank, and J. E. Smith}
}
\fi

\usepackage{amsmath, amssymb, mathrsfs}
\usepackage[version=4]{mhchem}
\usepackage{tikz}
\usepackage{graphicx}
\usepackage{enumitem}

\newsiamremark{example}{Example}

\begin{document}

\maketitle

\begin{abstract}
Input-to-state stability (ISS) provides a useful tool for analyzing the robustness of time-varying chemical reaction networks (CRNs). This paper investigates the ISS property for CRNs in two cases: one is for weakly reversible CRNs with nonzero deficiency and multiple linkage classes, which is beyond the limit of existing results requesting a weakly reversible CRN to be zero-deficiency and single-linkage-class; the other is for non-weakly reversible CRNs by leveraging the notion of linear conjugacy. These results, on the one hand enrich the studies of the ISS property of broader classes of CRNs, on the other hand serve for designing CRN-based molecular computing systems. The latter application suggests the ISS theory can justify the parallel mechanism of biomolecular computations. We use some CRNs of practical relevance to demonstrate our results, including the p53 signaling and dimerization network, the p21-activated kinase 1 network, etc. 

\end{abstract}

\begin{keywords}
chemical reaction network, input-to-state stability, mass-action kinetics, linear conjugacy, parallel biomolecular computation
\end{keywords}

\begin{MSCcodes}
92C45, 93D09, 93D25
\end{MSCcodes}


\section{Introduction}
Chemical reaction network (CRN) is a fundamental mathematical framework for describing chemical reactions and studying biological processes. It has been extensively studied in both deterministic and stochastic formulations \cite{anderson2015lyapunov,hoessly2021stationary}. When a CRN is endowed with mass-action kinetics, its dynamics are represented by a system of polynomial ordinary differential equations, commonly referred to as a mass-action system (MAS). The dynamical analysis of MASs has been extensively studied over the past several decades, with particular attention devoted to the roles of network structure \cite{craciun2025structure,feinberg1987chemical}, equilibria \cite{boros2019existence}, stability \cite{horn1972general}, and persistence \cite{angeli2011persistence}. Among these studies, the most influential results are the Deficiency Zero Theorem \cite{feinberg1987chemical} and complex balanced theory \cite{horn1972general}, which establish strong relationships between weak reversibility, deficiency, and the existence and stability of positive equilibria. 

However, most of the above studies focused on the case in which the reaction rate constants are fixed. Relatively little attention has been paid to time-varying systems in which the reaction rate constants change over time \cite{angeli2011persistence,chaves2005input}, despite the fact that, in practice, such constants may be affected by external factors such as temperature \cite{angeli2011persistence}. Input-to-state stability (ISS) provides a natural framework for analyzing the robustness of such systems with time-varying inputs. Roughly speaking, the ISS property states that, for a bounded input $u$, the trajectory can be bounded by a function of $u$. Furthermore, as $t$ increases, the trajectory approaches (in a Lyapunov stability manner) a ball whose radius depends on $u$ \cite{sontag1989smooth}. In the context of rate-controlled biochemical networks, Chaves and Sontag established semiglobal ISS results for a class of weakly reversible CRNs that have zero deficiency and a single linkage class, and applied these results to observer design \cite{chaves2005input,chaves2002state}.

As the development of silicon-based computation is increasingly constrained by physical limitations, many researchers have turned their attention to biomolecular computation because of its advantages, including low energy consumption, parallel computing, and biocompatibility \cite{chen2024synthetic,cherry2025supervised}. Many of these theoretical studies are based on the mathematical framework of CRNs \cite{fan2025automatic,vasic2022programming}. Recently, several studies have explored the possibility of operating multiple modules of biomolecular computations in parallel \cite{anderson2025chemical,chalk2019composable}, some of which make use of the ISS concept.

Despite these important results, the existing ISS theory for CRNs remains limited in several respects. First, many CRNs of interest have nonzero deficiency and multiple linkage classes. Second, a large class of biochemical and synthetic reaction networks is not weakly reversible. For these systems, classical complex balanced theory cannot be directly applied. Moreover, in applications to molecular computations, a bounded trajectory is not sufficient. It also needs to establish that a downstream system converges to the equilibrium associated with the limiting value of an upstream input signal. These issues motivate the development of ISS conditions for broader classes of CRNs and the investigation of their implications for interconnected molecular reaction modules.

In this paper, we extend the results of Chaves and Sontag to CRNs with more general structures, including networks with nonzero deficiency, multiple linkage classes, and networks that are not weakly reversible. The contributions of this paper are summarized as follows:
\begin{itemize}
    \item To establish the semiglobal ISS property for weakly reversible networks, we use the toric locus introduced in \cite{craciun2025structure} to characterize the admissible input-value set.
    \item To establish the semiglobal ISS property for some non-weakly reversible networks, we exploit the concept of linear conjugacy \cite{johnston2011linear} to relate their dynamical properties to those of weakly reversible networks, thereby deriving the corresponding admissible input-value sets.
    \item Based on the above results, we show that when the reaction-rate input converges to a constant value, the state variable also converges to an equilibrium associated with the limiting input, thereby providing a theoretical foundation for the implementation of parallel molecular computations.
\end{itemize}

The rest of this paper is organized as follows. Section \ref{section_2} introduces some preliminaries on CRNs and MASs, and presents the motivation of this paper. Section \ref{section_3} reviews several existing concepts and results, and establishes the semiglobal ISS property for weakly reversible networks. Section \ref{section_4} establishes the semiglobal ISS property for non-weakly reversible networks. In Section \ref{section_5}, we discuss the application of the established results to the implementation of parallel molecular computations. Finally, Section \ref{section_6} concludes this paper.

\textbf{Mathematical Notation:} $\mathbb{R}^n,\mathbb{R}_{\ge0}^n,\mathbb{R}_{>0}^n,\mathbb{Z}_{\ge 0}^n,\mathbb{Z}_{> 0}^n$ represent $n$-dimensional real space, nonnegative real space, positive real space, nonnegative integer space, and positive integer space, respectively; $x^{v_{\cdot j}}\triangleq \prod_{i=1}^nx_i^{v_{ij}}$, where $x \in \mathbb{R}^n$, $v_{\cdot j}\in \mathbb{Z}^n$ and $0^0=1$; $|\cdot|$ denotes the Euclidean norm; ess.sup. represents the essential supremum; $\|u(t)-u^*\|\triangleq \mathrm{ess.sup.}\{|u(t)-u^*|:t\ge 0\}$; $\gamma : \mathbb{R}_{\ge 0} \to \mathbb{R}_{\ge 0}$ is a class $\mathcal{K}$ function if it is continuous, strictly increasing and satisfies $\gamma(0)=0$; $\gamma$ is a class $\mathcal{K}_\infty$ function if it is a class $\mathcal{K}$ function and satisfies $\lim_{s \to \infty}\gamma(s)=\infty$; $\beta : \mathbb{R}_{\ge 0} \times \mathbb{R}_{\ge 0} \to \mathbb{R}_{\ge 0}$ is a class $\mathcal{KL}$ function if for each fixed $t$ the mapping $\beta(\cdot,t)$ is a class $\mathcal{K}$ function and for each fixed $s$ the function $\beta(s,t)$ decreases to zero on $t$ as $t \to \infty$.



\section{Preliminaries and motivation}\label{section_2}
In this section, we introduce some basic knowledge about CRN and MAS \cite{feinberg2019foundations}, and then present the motivation for the current study. 

\subsection{CRN}
Consider a CRN with $n$ species, denoted by $X_1,...,X_n$, and $r$ reactions with the $j$th reaction written as
$$\sum_{i=1}^nv_{ij}X_i\to \sum_{i=1}^nv'_{ij}X_i,$$
where $v_{ij}$ represents the stoichiometric coefficient of species $X_i$ 
in the $j$th reaction. Furthermore, define $v_{.j}=(v_{1j},...,v_{nj})^{\top}, v'_{.j}=(v_{1j}^{\prime},...,v_{nj}^{\prime})^{\top} \in\mathbb{Z}_{\geq 0}^n$ 
as the reactant and product complexes of the $j$th reaction, respectively.
The reaction can then be written as $v_{.j}\to v'_{.j}$.
Mathematically, a CRN is defined as follows.  

\begin{definition}[CRN]\label{def_CRN}
    A CRN consists of three finite sets: 
    \begin{enumerate}
        \item[\textup{(i)}] a finite \textit{species} set $\mathcal{S}=\{ X_1,...,X_n\}$;
        \item[\textup{(ii)}] a finite \textit{complex} set $\mathcal{C}=\bigcup_{j=1}^r{\left\{ v_{\cdot j},v_{\cdot j}^{\prime} \right\}}$;
        \item[\textup{(iii)}] a finite \textit{reaction} set $\mathcal{R}=\bigcup_{j=1}^r{\left\{ v_{\cdot j}\rightarrow v_{\cdot j}^{\prime} \right\}}$ satisfying
        \begin{enumerate}
            \item $\forall~ v_{\cdot j}\rightarrow v_{\cdot j}^{\prime}\in \mathcal{R},~ v_{\cdot j}\ne v_{\cdot j}^{\prime}$,
            \item $\forall~ v_{\cdot j} \in \mathcal{C}, \exists~ v_{\cdot j}^{\prime} \in \mathcal{C}$ such that $v_{\cdot j}\rightarrow v_{\cdot j}^{\prime}\in \mathcal{R}$ or $v_{\cdot j}^{\prime}\rightarrow v_{\cdot j}\in \mathcal{R}$.
        \end{enumerate}
    \end{enumerate}
    The triple $\mathcal{N}=(\mathcal{S},\mathcal{C},\mathcal{R})$ is usually used to express a CRN.
\end{definition}

A CRN can be represented as a directed graph whose vertices correspond to 
complexes and whose directed edges correspond to reactions. This graph-theoretic
representation allows several important structural classes of CRNs to be defined.

\begin{definition}[Reversible/Weakly reversible CRN]
    A CRN $(\mathcal{S},\mathcal{C},\mathcal{R})$ is 
    \begin{enumerate}
        \item[\textup{(i)}] \textit{reversible} if $\forall~ v_{\cdot j} \to v_{\cdot j}^{\prime} \in \mathcal{R}$, it holds $v_{\cdot j}^{\prime} \to v_{\cdot j} \in \mathcal{R}$;
        \item[\textup{(ii)}] \textit{weakly reversible} if $\forall~ v_{\cdot j} \to v_{\cdot j}^{\prime} \in \mathcal{R}$, there exists a sequence of complexes $v_{\cdot j_1},\cdots ,v_{\cdot j_p} \in \mathcal{C}$ such that $v_{\cdot j}^{\prime} \to v_{\cdot j_1} \in \mathcal{R}, v_{\cdot j_1} \to v_{\cdot j_2} \in \mathcal{R}$, $\cdots$, $v_{\cdot j_{p-1}} \to v_{\cdot j_p} \in \mathcal{R}$, $v_{\cdot j_p} \to v_{\cdot j} \in \mathcal{R}$.
    \end{enumerate}
\end{definition}

Clearly, every reversible CRN is weakly reversible, but not vice versa. A CRN is weakly reversible if and only if each of its \texttt{linkage class}, defined as a connected component of the underlying undirected reaction
graph, is strongly connected, or equivalently, if every reaction lies on a
directed cycle. 

For the reaction $v_{\cdot j}\rightarrow v_{\cdot j}^{\prime}$ in the CRN, we call $v_{\cdot j}^{\prime}-v_{\cdot j}$ its \texttt{reaction vector}, all of which induce two important concepts for dynamic analysis.

\begin{definition}[Stoichiometric subspace]\label{def_stoichiometric_subspace}
    For a CRN $(\mathcal{S},\mathcal{C},\mathcal{R})$, the linear subspace $\mathscr{S}\triangleq\mathrm{span}\{ v_{\cdot 1}^{\prime}-v_{\cdot 1},...,v_{\cdot r}^{\prime}-v_{\cdot r}\}$ is called the \textit{stoichiometric subspace} of the network. The dimension of $\mathscr{S}$, $\dim \mathscr{S}$, is called the dimension of the CRN.
\end{definition}

\begin{definition}[Stoichiometric compatibility class]
For a CRN $(\mathcal{S},\mathcal{C},\mathcal{R})$, let $x_0 \in \mathbb{R}_{\ge0}^n$, the set $x_0+\mathscr{S}=\{x_0+x:x \in \mathscr{S}\}$ is called the \textit{stoichiometric compatibility class} of $x_0$. Further, $(x_0+\mathscr{S}) \bigcap \mathbb{R}_{\ge 0}^n$ and $(x_0+\mathscr{S}) \bigcap \mathbb{R}_{> 0}^n$ are called the \textit{nonnegative stoichiometric compatibility class} and the \textit{positive stoichiometric compatibility class} of $x_0$, respectively.
\end{definition}

Based on these notions, we further introduce the concept of \texttt{deficiency}, which is often used to help characterize dynamical behaviors.

\begin{definition}[Deficiency]
    For a CRN $(\mathcal{S},\mathcal{C},\mathcal{R})$, let $c$ denote the number of complexes and $\ell$ denote the number of linkage classes. The \textit{deficiency} of the CRN is defined by $\delta =c-\ell-\dim \mathscr{S}$.
\end{definition}

The deficiency of a CRN is usually nonnegative because it can be seen as the dimension of a certain linear subspace \cite{feinberg2019foundations}. To better understand the above abstract concepts, we illustrate them with a concrete example.

\begin{example}\label{ex_CRN-MAS}
    For the following CRN $(\mathcal{S},\mathcal{C},\mathcal{R})$ shown on the left-hand side
    \begin{equation*}\label{eq_exMAS}
        \begin{tikzpicture}
        \node (x) at (-1,0.85) {$\mathrm{CRN:}$};
	    \node (a) at (0,0) {$2X_1$};
	    \node (b) at (2,0) {$X_2$};
	    \node (c) at (1,1.7) {$X_2+X_3$};
	    \draw[->] (a) -- (b);
   	  \draw[->] (b) -- (c);
    	\draw[->] (c) -- (a);

        \draw[dashed] (4,-0.5) -- (4,1.9);

        \node (x2) at (6,0.85) {$\mathrm{MAS:}$};
        \node (a2) at (7,0) {$2X_1$};
	    \node (b2) at (9,0) {$X_2$};
	    \node (c2) at (8,1.7) {$X_2+X_3$};
	    \draw[->] (a2) -- (b2) node[midway, below] {$\kappa_1$};
   	  \draw[->] (b2) -- (c2) node[midway, right] {$\kappa_2$};
    	\draw[->] (c2) -- (a2) node[midway, left] {$\kappa_3$};
       \end{tikzpicture}
    \end{equation*}
we get $\mathcal{S}=\{X_1,X_2,X_3\},~\mathcal{C}=\{(2,0,0)^{\top},(0,1,0)^{\top},(0,1,1)^{\top}\},~\mathcal{R}=\{(2,0,0)^{\top} \to (0,1,0)^{\top},(0,1,0)^{\top} \to (0,1,1)^{\top},  (0,1,1)^{\top} \to (2,0,0)^{\top}\}$, $\mathscr{S}=\mathrm{span}\{(-2,1,0)^{\top},(0,0,1)^{\top},\\(2,-1,-1)^\top\}$, $\dim \mathscr{S}=2$, and $\delta=3-1-2=0$. Clearly, it is weakly reversible, but not reversible.
\end{example}

\subsection{MAS}
When a CRN $(\mathcal{S},\mathcal{C},\mathcal{R})$ is equipped with mass-action kinetics, the rate of reaction $v_{\cdot j} \to v_{\cdot j}^{\prime}$ is measured by $\kappa_jx^{v_{\cdot j}}$, where $\kappa_j>0$ represents the rate constant, and $x \in \mathbb{R}_{\ge 0}^n$ with each element $x_i~(i=1,...,n)$ to represent the concentration of the species $X_i$.

\begin{definition}[MAS]
 A \textit{MAS} is a CRN $(\mathcal{S},\mathcal{C},\mathcal{R})$ equipped with mass-action kinetics $\kappa=(\kappa_1,... ,\kappa_r)^{\top}$, often labeled by $(\mathcal{S},\mathcal{C},\mathcal{R},\kappa)$ or $(\mathcal{N},\kappa)$. 
\end{definition}

The dynamics of $(\mathcal{S},\mathcal{C},\mathcal{R},\kappa)$ describes the change of concentrations of all species over time $t$, and thus follows
\begin{equation}\label{general dynamics}
    \dot{x}=\sum_{j=1}^{r}\kappa_jx^{v_{\cdot j}}\left(v_{\cdot j}^{\prime}-v_{\cdot j} \right),
\end{equation}
which are essentially polynomial ODEs. By integrating (\ref{general dynamics}) from $0$ to $t$, we get
\begin{equation}\label{integ_dynamics}
 x(t) = x_0 + \sum_{j=1}^r \left(v_{\cdot j}^{\prime}-v_{\cdot j} \right) \int_0^t \kappa_jx^{v_{\cdot j}}(\tau)\mathrm{d}\tau,   
\end{equation}
where $x_0=x(0)$ is the initial state of $(\mathcal{S},\mathcal{C},\mathcal{R},\kappa)$. This suggests that the state of $(\mathcal{S},\mathcal{C},\mathcal{R},\kappa)$ will evolve in the nonnegative stoichiometric compatibility class of $x_0$, i.e., in $(x_0+\mathscr{S}) \bigcap \mathbb{R}_{\ge 0}^n$. We can also write the equivalent expression of (\ref{general dynamics}) as a sum over complexes 
\begin{equation}\label{general dynamics_complex}
    \dot{x}=\sum_{z\in\mathcal{C}}x^z\sum_{\{j|v_{.j}=z\}}\kappa_j\left(v_{\cdot j}^{\prime}-v_{\cdot j} \right).
\end{equation}

Revisiting \textit{Example} \ref{ex_CRN-MAS} reveals that the dynamics of the MAS (on the right-hand side of the diagram in \textit{Example} \ref{ex_CRN-MAS}) is given by
    \begin{equation*}
        \left ( \begin{array}{c}
         \dot{x}_1 \\ \dot{x}_2 \\ \dot{x}_3
    \end{array} \right) =
     \left ( \begin{array}{c}
         -2\kappa_1x_1^2+2\kappa_3 x_2x_3 \\ \kappa_1x_1^2-\kappa_3x_2x_3 \\ \kappa_2x_2-\kappa_3x_2x_3
    \end{array} \right).
    \end{equation*}

\begin{definition}[Equilibrium]
     For a MAS $(\mathcal{S},\mathcal{C},\mathcal{R},\kappa)$ governed by (\ref{general dynamics}), a constant vector $x^* \in \mathbb{R}^n_{>0}$ is called a \textit{positive equilibrium} of the system if
    \begin{equation}
        \sum_{j=1}^{r}\kappa_j(x^*)^{v_{\cdot j}}\left(v_{\cdot j}^{\prime}-v_{\cdot j} \right)=0.
    \end{equation}
    A MAS that admits a positive equilibrium is said to be \textit{balanced}.
\end{definition}

\begin{definition}[Complex balanced equilibrium]\label{def_cbMAS}
    For a MAS $(\mathcal{S},\mathcal{C},\mathcal{R},\kappa)$ governed by (\ref{general dynamics}), a positive equilibrium $x^* \in \mathbb{R}^n_{>0}$ is called a \textit{complex balanced equilibrium} of the system if
    \begin{equation}
        \sum_{\{j|v_{\cdot j}=z\}}\kappa_j(x^*)^{v_{\cdot j}}=\sum_{\{j|v_{\cdot j }^{\prime}=z\}}\kappa_j(x^*)^{v_{\cdot j}},\quad \forall z \in \mathcal{C}.
    \end{equation}
    A MAS that admits a complex balanced equilibrium is said to be \textit{complex balanced}.
\end{definition}

The complex balanced equilibrium is certainly an equilibrium, but not vice versa. There are some well-known results about complex balanced MAS \cite{feinberg1987chemical,horn1972general}, including 
\begin{itemize}
 \item [(1)] a complex balanced MAS must be weakly reversible;
\item [(2)] if a MAS has a complex balanced equilibrium, then all the other equilibria (if any) are complex balanced;
          \item[(3)] Deficiency Zero Theorem: for any $\kappa \in \mathbb{R}^{r}_{>0}$, the MAS $(\mathcal{S},\mathcal{C},\mathcal{R},\kappa)$ is complex balanced if it is weakly reversible and has zero deficiency;
        \item [(4)] within each positive stoichiometric compatibility class, there is precisely one complex balanced equilibrium, and moreover, each complex balanced equilibrium is locally asymptotically stable. 
\end{itemize}

For the last result, the pseudo-Helmholtz free energy function
    \begin{equation}\label{Lyafun_deficiency_zero}
        V(x)=\sum_{i=1}^n \left(x_i(\ln x_i-\ln x_i^*-1)+x^*_i\right),~x \in \mathbb{R}^{n}_{> 0}
    \end{equation}
    is suggested as a Lyapunov function to establish the local asymptotic stability of each complex balanced equilibrium $x^*$.

\subsection{Motivation}
In the theoretical analysis of an MAS $(\mathcal{S},\mathcal{C},\mathcal{R},\kappa)$, the reaction rate constants $\kappa$ are typically treated as fixed parameters \cite{craciun2025structure,feinberg1987chemical,horn1972general,johnston2011linear}. However, under realistic biological conditions, these parameters often vary over time in response to external inputs, like temperature fluctuations, external electric-field effects \cite{angeli2011persistence}, uncertainty arising from the dynamics of unmodeled species \cite{vaghy2023persistence}, or the encoding of species concentrations as the inputs for molecular computation \cite{jiang2025input,jiang2026structure}. We thus consider a time-varying version of MAS governed by
\begin{equation}\label{eq_time_varying_CRN}
    \dot{x}=f(x;u)=\sum_{j=1}^r u_j(t)x^{v_{\cdot j}}\left(v_{\cdot j}^\prime-v_{\cdot j}\right),
\end{equation}
which is also called rate-controlled MAS in ISS capture \cite{chaves2005input}. Here, $x \in \mathbb{R}^n_{\ge 0},~u:\mathbb{R}_{\ge 0} \to \mathbb{U}$ with $\mathbb{U} \subseteq \mathbb{R}^r_{>0}$ is a piecewise locally Lipschitz function. Notably, if the system starts from an initial point $x_0 \in \mathbb{R}^n_{\ge 0}$, then the state of (\ref{eq_time_varying_CRN}) will still evolve in $(x_0+\mathscr{S})\cap \mathbb{R}^n_{\ge 0}$.

For the purpose of capturing ISS and its application to molecular computation, there are two questions of concern about (\ref{eq_time_varying_CRN}):

\textbf{Question 1:} For a bounded input $u(t)$, can the solution $x(t)$ of (\ref{eq_time_varying_CRN}) be bounded by an increasing function of $u$?

\textbf{Question 2:} If $\lim_{t \to \infty}u(t)=u^*$, can we have $\lim_{t \to \infty}x(t)=x^*$, where $x^*$ satisfies $f(x^*;u^*)=0$?

For \textbf{Question 1}, Chaves and Sontag \cite{chaves2005input,chaves2002state} have addressed it through formalizing the concept of semiglobal ISS (see \cref{def_ISS} below). They also proved a class of special weakly reversible CRNs with zero deficiency and single linkage class to be semiglobal ISS. Building on this concept, we further addressed \textbf{Question 2}, and applied the result to enable serial computing by parallel chemical reactions \cite{jiang2025input,jiang2026structure}. The strong application potential of the notion of semiglobal ISS in molecular computations motivates us to find more CRNs (not limited to single-linkage-class, deficiency-zero, weakly reversible CRNs) that can exhibit semiglobal ISS. 

In what follows, we proceed by reviewing the existing result for the case of deficiency-zero, single-linkage-class, weakly reversible CRNs \cite{chaves2005input,chaves2002state}, and then to the cases of nonzero-deficiency weakly reversible CRNs and non-weakly reversible CRNs.


\section{ISS for weakly reversible CRNs}\label{section_3}
In this section, we focus on exploring the ISS property for weakly reversible CRNs, including revisiting existing result for zero-deficiency weakly reversible CRNs, and then extending them to weakly reversible CRNs with nonzero deficiency and multiple linkage classes.

\subsection{Existing result for zero-deficiency, weakly reversible CRNs}
We begin by recalling several definitions \cite{chaves2005input,chaves2002state} related to ISS for MASs. 

\begin{definition}
 For a time-varying MAS of (\ref{eq_time_varying_CRN}) with input-value set $\mathbb{U} \subseteq \mathbb{R}^r_{>0}$, if for each initial state $x(0) \in \mathbb{R}^n_{>0}$ and each input $u(\cdot)\in \mathbb{U} $, the solution of (\ref{eq_time_varying_CRN}) at any time $t\in J_{x(0),u}=[0,t_{max})$ with $t_{max}$ to represent the blowing-up time, is $x(t) \in \mathbb{R}^n_{>0}$, then the system is $\mathbb{R}^n_{>0}$-forward invariant. Further, the system is $\mathbb{R}^n_{>0}$-forward complete if it is $\mathbb{R}^n_{>0}$-forward invariant with $J_{x(0),u}=[0,+\infty)$.
\end{definition}

\begin{definition}\label{def_ISS}
 The time-varying MAS of (\ref{eq_time_varying_CRN}) with input-value set $\mathbb{U} \subseteq \mathbb{R}^r_{>0}$ is called semiglobally input-to-state stable with respect to $(x^*,u^*)$ with fixed points $x^* \in \mathbb{R}^n_{>0}$ and $u^* \in \mathbb{U}$, if 
    \begin{enumerate}
        \item[\textup{(i)}] the system is $\mathbb{R}^n_{>0}$-forward complete;
        \item[\textup{(ii)}] for every compact set $F \subseteq \mathbb{R}^n_{\ge 0}$ containing $x^*$, and moreover, $\forall t \ge 0$ we have $x(s) \in F$ for any $s \in [0,t]$, there exist a class $\mathcal{KL}$ function $\beta=\beta_{F}$ and a class $\mathcal{K}_{\infty}$ function $\gamma=\gamma_F$ such that
        \begin{equation}
            |x(t)-x^*|\le \beta(|x_0-x^*|,t)+\gamma(\|u-u^*\|)
        \end{equation}
        for each initial condition $x_0=x(0) \in F \cap \{x^*+\mathscr{S}\}\cap \mathbb{R}^n_{>0}$ and each input $u(\cdot)\in \mathbb{U}$. 
    \end{enumerate}
\end{definition}

\begin{definition}\label{def_ISS_Lyafun}
Given a time-varying MAS of (\ref{eq_time_varying_CRN}) with input-value set $\mathbb{U} \subseteq \mathbb{R}^r_{>0}$, a continuous function $V:\mathbb{R}^n_{\ge 0} \to \mathbb{R}_{\ge 0}$ is a semiglobal ISS-Lyapunov function with respect to the fixed points $x^* \in \mathbb{R}^n_{>0}$ and $u^* \in \mathbb{U}$ for the system, if
    \begin{enumerate}
        \item[\textup{(i)}] the restriction of $V$ to $\mathbb{R}^n_{>0}$ is continuously differentiable;
        \item[\textup{(ii)}] there exist class $\mathcal{K}_{\infty}$ functions $\underline{\alpha},\overline{\alpha}$  such that
        $$\underline{\alpha}(|x-x^*|)\le V(x) \le \overline{\alpha}(|x-x^*|)$$
        for each $x \in \mathbb{R}^n_{\ge 0}$;
        \item[\textup{(iii)}] for every compact set $F \subseteq \mathbb{R}^n_{\ge 0}$ containing $x^*$, there exist $\mathcal{K}_{\infty}$ functions $\alpha=\alpha_F,\rho=\rho_F$ such that
        $$\nabla^{\top} V(x)f(x;u)\le -\alpha(|x-x^*|)+\rho(|u-u^*|)$$
        for every $x \in F \cap \{x^*+\mathscr{S}\}\cap \mathbb{R}^n_{>0}$ and every $u \in \mathbb{U}$.
    \end{enumerate}
\end{definition}

Utilizing these concepts, Chaves and Sontag  \cite{chaves2002state} proved that for a $\mathbb{R}^n_{>0}$-forward complete time-varying MAS of (\ref{eq_time_varying_CRN}) with input-value set $\mathbb{U}$, if there is a semiglobal ISS-Lyapunov function $V$ with respect to the fixed point pair $(x^*,u^*)$ for the system, then it is semiglobal ISS with respect to $(x^*,u^*)$. As an application, they also showed that \textit{a zero-deficiency, single-linkage-class, weakly reversible CRN} is semiglobal ISS under the conditions of $f(x^*;u^*)=0$ and $\mathbb{U} \subseteq \mathbb{R}^r_{>0}$ being a specific set.

\subsection{ISS for nonzero deficiency weakly reversible CRNs}
Building on the above existing result, we try to capture semiglobal ISS for nonzero deficiency weakly reversible CRNs. 

\begin{lemma}\label{lem_ISS_Lya}
 For a time-varying MAS of (\ref{eq_time_varying_CRN}) with input-value set $\mathbb{U} \subseteq \mathbb{R}^r_{>0}$ to be a compact set, suppose that for any $u^* \in \mathbb{U}$, the MAS with $u(t)\equiv u^*$ has a unique equilibrium $x^*=x^*(u^*)$ in the positive stoichiometric compatibility class of $x^*$. Let $V_0:\mathbb{R}^n_{\ge 0} \times \mathbb{R}^r_{>0} \to \mathbb{R}_{\ge 0}$ be a continuous function satisfying:
    \begin{enumerate}
        \item[(1)] The restriction of $V_0$ to $\mathbb{R}^n_{>0} \times \mathbb{U}$ is continuously differentiable;
        \item[(2)] For any $u^* \in \mathbb{U}$, there exist class $\mathcal{K}_{\infty}$ functions $\underline{\alpha}_{u^*},\overline{\alpha}_{u^*}$ such that $$\underline{\alpha}_{u^*}(|x-x^*|) \le V_0(x,u^*) \le \overline{\alpha}_{u^*}(|x-x^*|),~x \in \mathbb{R}^n_{\ge 0};$$
        \item[(3)] For any $u^* \in \mathbb{U}$, it holds that $$\nabla_x^{\top} V_0(x,u^*) f(x;u^*)\le 0,~x \in \mathbb{R}^n_{>0}$$ with the equality holding if and only if $x=x^*$.
    \end{enumerate}
Then $V(x)\triangleq V_0(x,u^*)$ is a semiglobal ISS-Lyapunov function for the system (\ref{eq_time_varying_CRN}) with respect to $(x^*,u^*)$, where $x^*$ supports $f(x^*;u^*)=0$.
\end{lemma}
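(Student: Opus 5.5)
We check the three items of \cref{def_ISS_Lyafun} for $V(x)=V_0(x,u^*)$, with $u^*\in\mathbb{U}$ fixed and $x^*=x^*(u^*)$. Item (i) follows at once from assumption (1), since $x\mapsto V_0(x,u^*)$ is a restriction of a $C^1$ function on $\mathbb{R}^n_{>0}\times\mathbb{U}$. Item (ii) is exactly assumption (2) with $\underline{\alpha}=\underline{\alpha}_{u^*}$ and $\overline{\alpha}=\overline{\alpha}_{u^*}$. So all the work is in item (iii): for a compact $F\ni x^*$, build $\alpha_F,\rho_F\in\mathcal{K}_\infty$ with $\nabla^\top V(x)f(x;u)\le-\alpha_F(|x-x^*|)+\rho_F(|u-u^*|)$ on $K\triangleq F\cap\{x^*+\mathscr{S}\}\cap\mathbb{R}^n_{>0}$ and $u\in\mathbb{U}$.

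\textbf{Step 1: split the derivative.} Since $f$ is linear in $u$,
\[
\nabla^\top V(x)f(x;u)=\nabla^\top V(x)f(x;u^*)+\sum_{j=1}^r (u_j-u^*_j)\,x^{v_{\cdot j}}\,\nabla^\top V(x)(v'_{\cdot j}-v_{\cdot j}).
\]
For the second term we use $|u_j-u_j^*|\le|u-u^*|$ and bound it by $M_F|u-u^*|$, where
\[
M_F=\sup_{x\in K}\sum_j x^{v_{\cdot j}}\,|\nabla^\top V(x)(v'_{\cdot j}-v_{\cdot j})|.
\]
Finiteness of $M_F$ is the main obstacle, because $K$ need not be closed in $\mathbb{R}^n_{>0}$ and $\nabla V$ may blow up at the boundary of the orthant. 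For the pseudo-Helmholtz-type functions intended here, $\nabla V$ has logarithmic entries, and $x^{v_{\cdot j}}\ln x_i$ stays bounded whenever $v_{ij}>0$. In general, I would either add to hypothesis (1) that $x^{v_{\cdot j}}\nabla V(x)$ is bounded on bounded subsets of $\mathbb{R}^n_{>0}$, or replace $K$ by $F\cap\{x^*+\mathscr{S}\}\cap\{x\ge\varepsilon\}$, where continuity on a compact set gives finiteness. I would try the first route, since it is what the concrete applications need. With $M_F<\infty$, take $\rho_F(s)=M_F s+s$, which is $\mathcal{K}_\infty$.

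\textbf{Step 2: the comparison function $\alpha_F$.} Set $W(x)=-\nabla^\top V(x)f(x;u^*)\ge0$ on $K$; by assumption (3), $W(x)=0$ iff $x=x^*$. Define
\[
\psi(s)=\inf\{W(x):x\in K,\ |x-x^*|\ge s\}.
\]
This $\psi$ is nondecreasing with $\psi(0)=0$. We need $\psi(s)>0$ for $s>0$, which again requires a compactness argument on $K$. If $W$ extends continuously to the closure of $K$ and stays positive at boundary points other than $x^*$, positivity follows. Otherwise we use the same $\varepsilon$-truncation as in Step 1, or the uniqueness of the equilibrium in the positive compatibility class together with the sign of $W$ near the boundary.

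\textbf{Step 3: assemble.} Choose $\alpha_F\in\mathcal{K}$ with $\alpha_F\le\psi$, for example $\alpha_F(s)=\frac1s\int_0^s\min\{\psi(\tau),\tau\}\,d\tau$. Since $K$ is bounded, $|x-x^*|\le D_F$ on $K$, so $\alpha_F$ may be modified beyond $D_F$ (for instance by adding $\max\{0,s-D_F\}$) to make it $\mathcal{K}_\infty$ without affecting the bound. Combining Steps 1 and 2 gives $\nabla^\top Vf(x;u)\le-\alpha_F(|x-x^*|)+\rho_F(|u-u^*|)$ on $K$, which is item (iii). Compactness of $\mathbb{U}$ is used only to keep $|u-u^*|$ bounded, and it allows $M_F$ to be taken uniformly in $u$.
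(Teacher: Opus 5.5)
Your outline is the paper's own proof: the same additive split $\nabla^\top V f(x;u)=I_1+I_2$, the same infimum construction of $\alpha_F$, and the same linear $\rho_F$. The two points you leave open are exactly the ones the paper settles only by assertion. It obtains $C_F$ ``since $F$ is compact and $\nabla_xV_0$ is continuous'', and it simply states that $\tilde\alpha_F(s)>0$ and that $\tilde\alpha_F$ is continuous. Yet $\nabla_xV_0$ is continuous only on $\mathbb{R}^n_{>0}$, and $K=F\cap(x^*+\mathscr{S})\cap\mathbb{R}^n_{>0}$ is not compact once $F$ meets $\partial\mathbb{R}^n_{\ge0}$.

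You do not close these points either, so your proposal is not a proof of the lemma as stated. In fact no proof exists, because Step~2 genuinely fails under (1)--(3). Take $X\rightleftharpoons 2X$ with rates $u_1,u_2$. Then $f(x;u)=u_1x-u_2x^2$, $\mathscr{S}=\mathbb{R}$, $x^*(u)=u_1/u_2$ is the unique positive equilibrium, and $V_0(x,u)=x(\ln x-\ln x^*(u)-1)+x^*(u)$ satisfies (1)--(3); the network is even weakly reversible with zero deficiency. But $W(x)=u_2^*\,x(\ln x-\ln x^*)(x-x^*)\to0$ as $x\to0^+$. So for $F=[0,2x^*]$ your $\psi(s)$ vanishes for $0<s\le x^*$, and item (iii) already fails at $u=u^*$: it would require $W(x)\ge\alpha_F(|x-x^*|)$, whose right-hand side tends to $\alpha_F(x^*)>0$. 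The obstruction is the boundary equilibrium $0$, so uniqueness of the positive equilibrium cannot rescue Step~2. For this network even the estimate of \cref{def_ISS} fails on that $F$, since solutions from $x_0=\epsilon$ need time of order $\ln(1/\epsilon)$ to leave a neighbourhood of $0$.

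Of your remedies, the $\varepsilon$-truncation is the sound one. For $F$ a compact subset of $\mathbb{R}^n_{>0}$, $K$ is compact, so $M_F<\infty$ and $\psi(s)>0$ follow. Your averaging formula then gives a genuine class-$\mathcal{K}$ minorant of $\psi$, which is cleaner than the paper's unproved continuity claim. This proves the weaker lemma with $F\subset\mathbb{R}^n_{>0}$, which is also the only case invoked in the proof of \cref{thm_globally_stable}.

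Your preferred extra hypothesis for Step~1, however, is not satisfied by the applications. In the p53 example the inflow $\varnothing\to X_1$ has $x^{v_{\cdot1}}=1$. For $u=(1,1,2,2,1,4)^\top\in\mathcal{T}$ one gets $I_2=-(\ln x_1-\ln 2)+2x_2(\ln x_2-\ln\tfrac12)\to+\infty$ as $x_1\to0^+$. So no $\rho_F$ can dominate $I_2$ on any $F$ reaching $\{x_1=0\}$, even though the full derivative tends to $-\infty$ there.

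If you want to keep boundary points, the missing idea is to compare reaction by reaction instead of splitting additively. Set $g_j(x)=x^{v_{\cdot j}}\nabla^\top V(x)(v'_{\cdot j}-v_{\cdot j})$ and $\delta=\max_j|u_j/u_j^*-1|\le\tfrac12$. Then $\nabla^\top V(x)f(x;u)=\sum_ju_jg_j\le-(1-\delta)W+2\delta\sum_ju_j^*g_j^+$, where each $g_j^+$ is bounded on bounded sets for pseudo-Helmholtz functions, and compactness of $\mathbb{U}$ handles $\delta>\tfrac12$. You would still have to add $\psi(s)>0$ as a hypothesis, which is precisely what the logistic example violates.
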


\begin{proof}
    According to condition (1) and (2), we know that $V(x)$ satisfies condition (i) and (ii) in \cref{def_ISS_Lyafun}. Therefore, we just need to show that $V(x)$ satisfies condition (iii). For any compact set $F \subseteq \mathbb{R}^n_{\ge 0}$ containing $x^*$, it holds
    \begin{equation}\label{eq_pf1_1}
    \begin{split}
        \nabla^{\top} V(x)f(x;u) & =\nabla_x^{\top}V_0(x,u^*)f(x;u^*)+\nabla_x^{\top}V_0(x,u^*)f(x;u-u^*) \\
        &\triangleq I_1+I_2
    \end{split}
    \end{equation}
    By condition (3) we can get $-I_1=-\nabla_x^{\top}V_0(x,u^*)f(x;u^*)\ge 0,~x\in F\cap (x^*+\mathscr{S}) \cap \mathbb{R}^n_{>0}$, with the equality holding if and only if $x=x^*$. Denote $s_{\mathrm{max}}=\sup\{s:\exists~x\in F\cap (x^*+\mathscr{S})\cap \mathbb{R}^n_{>0}$ such that $|x-x^*|\ge s \}$, and let
    \begin{equation*}
        \tilde{\alpha}_F(s)=\inf_{\substack{|x-x^*|\ge s \\ x\in F\cap (x^*+\mathscr{S})\cap \mathbb{R}^n_{>0}}}\left(-\nabla_x^{\top}V_0(x,u^*)f(x;u^*)\right),\quad 0\le s \le s_{\mathrm{max}}.
    \end{equation*}
Since $\tilde{\alpha}_F(0)=0$ and $\tilde{\alpha}_F(s)>0$ for all 
$s\in(0,s_{\max}]$, and $\tilde{\alpha}_F$ is continuous and nondecreasing on 
$[0,s_{\max}]$, there exists a class $\mathcal{K}$ function $\alpha_F$ such that
$\alpha_F(s)\le \tilde{\alpha}_F(s)/2$ for all $s\in[0,s_{\max}]$.
Moreover, $\alpha_F$ can be extended to $\mathbb R_{\ge 0}$ as a class $\mathcal{K}_\infty$
function, for instance by defining
$\alpha_F(s)=\alpha_F(s_{\max})+s-s_{\max}$ for $s>s_{\max}$. Thus the following equation holds
\begin{equation}\label{eq_pf1_2}
        -I_1 \ge \tilde{\alpha}_F(|x-x^*|)\ge \alpha_F(|x-x^*|),~\forall~x \in F\cap (x^*+\mathscr{S})\cap \mathbb{R}^n_{>0}.
    \end{equation}
    Since $F$ is a compact set and $\nabla_x^{\top}V_0(x,u^*)$ is continuous, there is a constant $C_F>0$ such that
    \begin{equation}\label{eq_pf1_3}
        \begin{split}
            I_2&=\nabla_x^{\top}V_0(x,u^*)\sum_{j=1}^r(u_j-u_j^*)x^{v_{\cdot j}}(v_{\cdot j}^{\prime}-v_{\cdot j}) \\
            & \le C_F \|u-u^*\|=\rho_F(\|u-u^*\|)
        \end{split}
    \end{equation}
    where $x \in F\cap (x^*+\mathscr{S})\cap \mathbb{R}^n_{>0}$, $\rho_F(s)=C_Fs$ is a class $\mathcal{K}_{\infty}$ function. Substituting (\ref{eq_pf1_2}) and (\ref{eq_pf1_3}) into (\ref{eq_pf1_1}) we know that the function $V(x)$ satisfies condition (iii) in \cref{def_ISS_Lyafun}, and the proof is complete.
\end{proof}

\begin{proposition}\label{thm_ISS_lya}
Consider a time-varying MAS of (\ref{eq_time_varying_CRN}) with input-value set $\mathbb{U} \subseteq \mathbb{R}^r_{>0}$ to be a compact set, and suppose that it is $\mathbb{R}^n_{>0}$-forward complete. Then for any $u^* \in \mathbb{U}$, under the conditions given in \cref{lem_ISS_Lya} the system (\ref{eq_time_varying_CRN}) starting from $x_0$ is semiglobal ISS with respect to $(x^*,u^*)$, where $x^*$ supports $f(x^*;u^*)=0$ and is contained in a compact set $F \subseteq \mathbb{R}^n_{\ge 0}$, and moreover, $x_0 \in F \cap \{x^*+\mathscr{S}\}\cap \mathbb{R}^n_{>0}$. 
\end{proposition}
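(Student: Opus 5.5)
The plan is to combine \cref{lem_ISS_Lya} with the Lyapunov characterization of semiglobal ISS due to Chaves and Sontag \cite{chaves2002state}, recalled after \cref{def_ISS_Lyafun}. That result states that an $\mathbb{R}^n_{>0}$-forward complete system (\ref{eq_time_varying_CRN}) admitting a semiglobal ISS-Lyapunov function with respect to $(x^*,u^*)$ is semiglobal ISS with respect to $(x^*,u^*)$. Condition (i) of \cref{def_ISS} is exactly the forward completeness hypothesis. Everything therefore reduces to producing the ISS-Lyapunov function and checking that the quantifiers match.

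First, fix $u^*\in\mathbb{U}$. By the standing hypothesis of \cref{lem_ISS_Lya}, there is a unique equilibrium $x^*=x^*(u^*)$ with $f(x^*;u^*)=0$ in its positive stoichiometric compatibility class. Take a compact $F\subseteq\mathbb{R}^n_{\ge 0}$ containing $x^*$, and initial data $x_0\in F\cap\{x^*+\mathscr{S}\}\cap\mathbb{R}^n_{>0}$. \Cref{lem_ISS_Lya} then gives that $V(x)=V_0(x,u^*)$ is a semiglobal ISS-Lyapunov function. In particular, for this $F$ we obtain $\mathcal{K}_\infty$ functions $\alpha_F,\rho_F$ with
\[
\nabla^\top V(x)f(x;u)\le -\alpha_F(|x-x^*|)+\rho_F(|u-u^*|)
\]
on $F\cap(x^*+\mathscr{S})\cap\mathbb{R}^n_{>0}$, together with the sandwich bounds $\underline{\alpha}_{u^*}$ and $\overline{\alpha}_{u^*}$.

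Second, I would sketch the standard comparison argument rather than merely cite it, since \cref{def_ISS}(ii) only concerns trajectories that remain in $F$ up to time $t$. Along such a trajectory, forward completeness and forward invariance keep $x(s)$ in $F\cap(x^*+\mathscr{S})\cap\mathbb{R}^n_{>0}$ for $s\in[0,t]$, so the dissipation inequality holds along it. Whenever $|x-x^*|\ge \alpha_F^{-1}(2\rho_F(\|u-u^*\|))$, we get
\[
\dot V\le -\tfrac12\alpha_F\bigl(\overline{\alpha}_{u^*}^{-1}(V)\bigr).
\]
The usual comparison lemma then yields a $\mathcal{KL}$ bound on $V$ outside the sublevel set $\{V\le \overline{\alpha}_{u^*}\circ\alpha_F^{-1}(2\rho_F(\|u-u^*\|))\}$. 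Converting back through $\underline{\alpha}_{u^*}^{-1}$ produces $\beta_F$ and $\gamma_F=\underline{\alpha}_{u^*}^{-1}\circ\overline{\alpha}_{u^*}\circ\alpha_F^{-1}\circ 2\rho_F$. Finally, the sum-of-max inequality gives the estimate in \cref{def_ISS}.

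The main obstacle I expect is the mismatch between the local validity of the dissipation inequality and the global sandwich bounds. The functions $\alpha_F$ and $\rho_F$ are only meaningful on $F$, where $|x-x^*|\le s_{\max}$. I would handle this by observing that trajectories are assumed to stay in $F$ on $[0,t]$, so only values of $V$ attained in $F$ matter. Where needed, I would also use that $\gamma_F$ may be enlarged beyond $s_{\max}$ without loss, since for large $\|u-u^*\|$ the bound $|x(t)-x^*|\le s_{\max}\le\gamma_F(\|u-u^*\|)$ holds trivially. A secondary point is that $\rho_F$ in \cref{lem_ISS_Lya} is stated with $\|u-u^*\|$, whereas the comparison argument uses the pointwise value $|u(s)-u^*|$. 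Since the pointwise value is dominated by the essential supremum almost everywhere, this causes no difficulty.
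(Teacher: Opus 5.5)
Your proposal is correct and follows the same route as the paper. The paper proves the proposition by combining \cref{lem_ISS_Lya}, which supplies the semiglobal ISS-Lyapunov function $V(x)=V_0(x,u^*)$, with Lemma 3.7 of \cite{chaves2002state}, which says that forward completeness plus such a function gives semiglobal ISS; your comparison-argument sketch (including the enlargement of $\gamma_F$ beyond $s_{\max}$ and the pointwise versus essential-supremum bound on $|u-u^*|$) simply unpacks that cited lemma.
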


\begin{proof}
   The proof follows directly by applying Lemma 3.7 in \cite{chaves2002state} and \cref{lem_ISS_Lya}.
\end{proof}

\cref{thm_ISS_lya} provides some conditions to support the MAS of (\ref{eq_time_varying_CRN}) to be semiglobal ISS. In particular, if the network is weakly reversible, we can get the main result in this section. For this purpose, we introduce an important concept that characterizes the set  $\mathbb{U}$.

\begin{definition}[Toric Locus \cite{craciun2025structure}]\label{def_toric_locus}
    For a CRN $\mathcal{N}=(\mathcal{S},\mathcal{C},\mathcal{R})$, its toric locus $\mathcal{T}(\mathcal{N}) \subseteq \mathbb{R}^r_{>0}$ denotes the set of parameters $\kappa \in \mathbb{R}^r_{>0}$, for which the corresponding MAS $(\mathcal{N},\kappa)$ is complex balanced.
\end{definition}

As discussed in \cite{craciun2025structure}, for a given CRN $\mathcal{N}$, its toric locus $\mathcal{T}(\mathcal{N}) \ne \varnothing$ if and only if $\mathcal{N}$ is weakly reversible. 

\begin{theorem}\label{coro_cb_mas}
Consider a time-varying MAS of (\ref{eq_time_varying_CRN}) with weakly reversible network structure. Denote its toric locus by $\mathcal{T}$, then for any compact input-value set $\mathbb{U} \subseteq \mathcal{T}$ and any $u^* \in \mathbb{U}$, the system (\ref{eq_time_varying_CRN}) starting from $x_0$ is semiglobal ISS with respect to $(x^*,u^*)$, where $x^*$ supports $f(x^*;u^*)=0$ and is contained in a compact set $F \subseteq \mathbb{R}^n_{\ge 0}$, and moreover, $x_0 \in F \cap \{x^*+\mathscr{S}\}\cap \mathbb{R}^n_{>0}$. 
\end{theorem}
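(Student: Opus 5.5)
The plan is to reduce \cref{coro_cb_mas} to \cref{thm_ISS_lya}. Two things must be checked: that the system is $\mathbb{R}^n_{>0}$-forward complete, and that the hypotheses of \cref{lem_ISS_Lya} hold whenever $\mathbb{U}\subseteq\mathcal{T}$. The Lyapunov candidate will be the pseudo-Helmholtz free energy (\ref{Lyafun_deficiency_zero}), parametrized by the equilibrium belonging to each input value.

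\textbf{Construction of $V_0$.} Fix $x^*$ with $f(x^*;u^*)=0$. Since $u^*\in\mathcal{T}$, the MAS $(\mathcal{N},u^*)$ is complex balanced, so by fact (2) every positive equilibrium, in particular $x^*$, is complex balanced. By fact (4), $x^*$ is the unique positive equilibrium in $(x^*+\mathscr{S})\cap\mathbb{R}^n_{>0}$, which is the uniqueness hypothesis of \cref{lem_ISS_Lya}. For $w\in\mathcal{T}$, let $\bar x(w)$ denote the complex balanced equilibrium of $(\mathcal{N},w)$ in $x^*+\mathscr{S}$. Define
\[
V_0(x,w)=\sum_{i=1}^n\bigl(x_i(\ln x_i-\ln \bar x_i(w)-1)+\bar x_i(w)\bigr),
\]
extended continuously to $\partial\mathbb{R}^n_{\ge0}$ using $0\ln 0=0$.

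\textbf{Verification of conditions (1)--(3).} Condition (1) needs smoothness in $w$. On $\mathcal{T}$, $\bar x(w)$ is the solution of the complex balancing equations, which are nondegenerate on the compatibility class. Only $V(x)=V_0(x,u^*)$ is actually used in the proof of \cref{lem_ISS_Lya}, so if smoothness in $w$ is troublesome I will note that only the $x$-regularity at fixed $u^*$ matters.

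For condition (2), $V_0(\cdot,u^*)$ is strictly convex, vanishes only at $x^*$, is continuous on $\mathbb{R}^n_{\ge0}$, and grows superlinearly. The standard radial construction
\[
\underline{\alpha}(s)=\min_{|x-x^*|\ge s}V,\qquad \overline{\alpha}(s)=\max_{|x-x^*|\le s}V,
\]
suitably adjusted to be strictly increasing, gives the required $\mathcal{K}_\infty$ bounds.

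For condition (3), I invoke the classical Horn--Jackson computation:
\[
\nabla_x^\top V\,f(x;u^*)=\sum_j u_j^*x^{v_{\cdot j}}\bigl(\ln (x/x^*)^{v'_{\cdot j}}-\ln (x/x^*)^{v_{\cdot j}}\bigr).
\]
Using $e^a(b-a)\le e^b-e^a$ and the complex balancing at $x^*$, this is $\le0$. Equality forces $\ln(x/x^*)\in\mathscr{S}^\perp$, so $x$ is an equilibrium. Restricted to $x^*+\mathscr{S}$, uniqueness then gives $x=x^*$; the restriction to the compatibility class is exactly how condition (iii) of \cref{def_ISS_Lyafun} is applied.

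\textbf{Forward completeness, the main obstacle.} Positivity, i.e. invariance of $\mathbb{R}^n_{>0}$, follows from the mass-action structure: if $x_i\to0$, then all reactions consuming $X_i$ have rates vanishing with $x_i$, so $\dot x_i\ge -Cx_i$ locally. Boundedness is the harder part, since the inputs vary in time and no single Lyapunov function is decreasing. I would first try a uniform estimate. Because $\mathbb{U}$ is compact, $V$ satisfies $\dot V\le \rho(|u-u^*|)$ times a polynomial on large sets, which is not directly usable. A more robust approach is to use weak reversibility to bound trajectories by a time-independent argument, exploiting that along $x_0+\mathscr{S}$ the complexes with maximal monomial value dominate and flow ``downhill'' in a cycle. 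Alternatively, I would appeal to the known result that weakly reversible mass-action systems with rate constants confined to a compact subset of $\mathbb{R}^r_{>0}$ have bounded trajectories (endotactic/tropical arguments). I expect to cite this persistence and boundedness literature rather than reprove it. Once forward completeness is established, \cref{thm_ISS_lya} yields the claim.
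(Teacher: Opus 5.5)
Your reduction to \cref{thm_ISS_lya}, the choice of the pseudo-Helmholtz function, and your checks of conditions (1)--(3) of \cref{lem_ISS_Lya} match Steps 1 and 3 of the paper's proof. So does the positivity argument. You also correctly restrict the equality case in (3) to $x^*+\mathscr{S}$. The gap is Step 2, forward completeness: you name it as the main obstacle but never prove it.

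First, you aim at uniform boundedness, which is far more than is needed. $\mathbb{R}^n_{>0}$-forward completeness only asks you to exclude blow-up in finite time, and the estimate in \cref{def_ISS}(ii) is only required while the trajectory stays in $F$. Second, the result you plan to cite is not available at the generality of this theorem. The known boundedness/permanence results for rate constants varying in a compact set cover single-linkage-class, two-species, or strongly endotactic networks. A weakly reversible network with several linkage classes need not be strongly endotactic. In arbitrary dimension, the statement you want for such networks is essentially the permanence half of the global attractor conjecture. You would thus be resting exactly the new case of this theorem (several linkage classes, nonzero deficiency) on the deepest claimed result in the area. The ``downhill along cycles'' picture is the idea behind those results, not a proof.

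The missing idea is to let the center of the Lyapunov function move with the input. You are right that a fixed center gives an unusable polynomial perturbation; a moving center does not. Let $\bar x(t)=\bar x(u(t))$ be the complex balanced equilibrium of the frozen system $\dot x=f(x;u(t))$ in $x_0+\mathscr{S}$; it exists because $u(t)\in\mathbb{U}\subseteq\mathcal{T}$. Put $W(t)=\sum_i\bigl(x_i(\ln x_i-\ln \bar x_i-1)+\bar x_i\bigr)$. Then
\[
\dot W=(\mathrm{Ln}\,x-\mathrm{Ln}\,\bar x)^{\top}f(x;u(t))+\sum_{i=1}^n\frac{\bar x_i-x_i}{\bar x_i}\,\dot{\bar x}_i .
\]
The first term is $\le 0$ at every instant by your Horn--Jackson computation, because the center is complex balanced for the same $u(t)$. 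The second term is only linear in $x$. Smoothness of $u\mapsto\bar x(u)$ on the compact set $\mathbb{U}$ bounds $\bar x_i$ and $1/\bar x_i$ and gives $|\dot{\bar x}|\le C|\dot u|$. The elementary inequality $|x_i-\bar x_i|<x_i(\ln x_i-\ln \bar x_i-1)+2\bar x_i$ then yields $\dot W\le C_3W+C_4$ whenever $\dot u$ is bounded (the input is piecewise Lipschitz). Gronwall's inequality then contradicts $W(t)\to\infty$ as $t\to t_{\max}<\infty$.

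This is exactly where the smooth dependence of the complex balanced equilibrium on the parameters becomes indispensable. That is the dependence you were willing to drop in condition (1). A genuine permanence theorem would free you from any regularity assumption on $u$, but the elementary argument above is all the theorem needs.
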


\begin{proof}
We complete the proof in three steps.

    \textit{Step 1.} We prove that system (\ref{eq_time_varying_CRN}), with input-value set $\mathbb{U}$, is $\mathbb{R}^n_{>0}$-forward invariant, i.e., for each initial state $x(0)=x_0 \in \mathbb{R}^n_{>0}$ and each $\mathbb{U}$-valued input $u(\cdot)$, the corresponding maximal solution of (\ref{eq_time_varying_CRN}), which is defined on $[0,t_{max})$, satisfies $x(t) \in (x_0+\mathscr{S})\cap \mathbb{R}^n_{>0},~0\le t <t_{max}$. This proof is very similar to the proof of \textit{Proposition} 5.1 in \cite{chaves2005input},  so we will not reproduce it here.

    \textit{Step 2.} We prove that system (\ref{eq_time_varying_CRN}), with input-value set $\mathbb{U}$, is $\mathbb{R}^n_{>0}$-forward complete, i.e., $t_{max}=+\infty$. To prove this claim, we argue by contradiction. Suppose $t_{max}<+\infty$, then it holds $\lim_{t \to t_{max}}|x(t)|=+\infty$. Since $\mathbb{U} \subseteq \mathcal{T}$, for any $u\in \mathbb{U}$, the MAS is complex balanced, and thus there is precisely one equilibrium in each positive stoichiometric compatibility class. Now consider the function
    \begin{equation*}
        W(t)=V(x(t),x^*(u(t)))=\sum_{i=1}^n \left(x_i(t)(\ln x_i(t)-\ln x_i^*(u(t))-1)+x^*_i(u(t))\right),
    \end{equation*}
    where $x^*(u(t))$ is the unique equilibrium of $\dot{x}=f(x,u(t))$ in $(x_0+\mathscr{S})\cap \mathbb{R}^n_{>0}$. Clearly, it holds $\lim_{t \to t_{max}}W(t)=+\infty$, and for $t \in [0,t_{\max})$ we have
    \begin{equation}\label{eq_pf_0}
    \begin{split}
        \frac{dW}{dt}&=\nabla_x^{\top}V(x(t),x^*(u(t)))\dot{x}+\nabla^{\top}_{x^*}V(x(t),x^*(u(t)))\dot{x}^* \\
        &=\left(\mathrm{Ln}(x)-\mathrm{Ln}(x^*)\right)^{\top}f(x,u)+\sum_{i=1}^n\frac{1}{x_i^*}(x_i^*-x_i)\dot{x}_i^*
    \end{split}
    \end{equation}
    where $\mathrm{Ln}(x)=(\ln x_1,...,\ln x_n)^{\top},~\dot{x}^*=\frac{d}{dt}(x^*(u(t)))$. Since (\ref{Lyafun_deficiency_zero}) is the Lyapunov function of complex balanced MAS, we have
    \begin{equation}\label{eq_pf_1}
        \left(\mathrm{Ln}(x)-\mathrm{Ln}(x^*)\right)^{\top}f(x,u)\le 0.
    \end{equation}
    In addition, according to the \textit{Corollary} 3.13 in \cite{craciun2025structure}, the function $x^*=x^*(u)$ is continuously differentiable. As $u(t) \in \mathbb{U}$, where $u(t)$ is piecewise locally Lipschitz function, and $\mathbb{U}$ is a compact set, we have
    \begin{equation}\label{eq_pf_2}
        \dot{x}_i^*=\sum_{j=1}^r\frac{dx_i^*}{du_j}\frac{du_j}{dt} \leq \sum_{j=1}^r\left\vert\frac{dx_i^*}{du_j}\right\vert\cdot\left\vert\frac{du_j}{dt}\right\vert \le \sum_{j=1}^rC_1\cdot L=C_1Lr,
    \end{equation}
    and 
    \begin{equation}\label{eq_pf_3}
        1/x_i^*\le C_2,\quad x_i^* \le C_2
    \end{equation}
    where $C_1,C_2,L>0$ are constants. Finally, to estimate $x_i^*-x_i$, we define the function
    \begin{equation*}
        \varphi(r)=r(\ln r-\ln x_i^*-1)+2x_i^*-|x_i^*-r|.
    \end{equation*}
    For $r\ge x_i^*$, we have $\varphi^{\prime}(r)=\ln r-\ln x_i^*-1$, and $\varphi^{\prime}(r)>0 \Longleftrightarrow r>ex_i^*$, therefore it holds $\varphi(r)\ge \varphi(ex_i^*)=(3-e)x_i^*>0$; For $0<r<x_i^*$, we have $\varphi^{\prime}(r)=\ln r-\ln x_i^*+1$, and $\varphi^{\prime}(r)>0 \Longleftrightarrow x_i^*/e<r<x_i^*$, therefore it holds $\varphi(r)\ge \varphi(x_i^*/e)=(1-1/e)x_i^*>0$. In a word, we have $\varphi(r)>0,~\forall r>0$. Specially, $\varphi(x_i)>0$, which means
    \begin{equation}\label{eq_pf_4}
        |x_i^*-x_i|<x_i(\ln x_i-\ln x_i^*-1)+2x_i^*.
    \end{equation}
    Substituting \cref{eq_pf_1,eq_pf_2,eq_pf_3,eq_pf_4} into (\ref{eq_pf_0}) we obtain
    \begin{equation}\label{eq_pf_5}
        \begin{split}
            \frac{dW}{dt}&\le C_1C_2Lr\sum_{i=1}^n\left(x_i(\ln x_i-\ln x_i^*-1)+2x_i^* \right) \\
            &\le C_1C_2LrW+C_1C_2^2Lrn=C_3W+C_4,
        \end{split}
    \end{equation}
    where $C_3=C_1C_2Lr,~C_4=2C_1C_2^2Lrn>0$ are constants. Applying Gronwall's inequality‌ to (\ref{eq_pf_5}) it yields
    \begin{equation}
        W(t)\le\left( W_0+ \frac{C_4}{C_3}\right)e^{C_3t}-\frac{C_4}{C_3},\quad t \in [0,t_{max}),
    \end{equation}
    which is a contradiction to $\lim_{t \to t_{max}}W(t)=+\infty$. Therefore, we have $t_{max}=+\infty$.

    \textit{Step 3.} We prove the result of this theorem. Since for any $u^* \in \mathbb{U}$, the MAS is complex balanced, and thus the function (\ref{Lyafun_deficiency_zero}) satisfies all the conditions in \cref{lem_ISS_Lya}. By \cref{thm_ISS_lya}, the conclusion is true.
\end{proof}

\begin{remark}
\cref{coro_cb_mas} imposes no restriction on the deficiency or the number of linkage classes. It can thus be viewed as a generalization of the result in \cite{chaves2005input}.
\end{remark}

\begin{example}\label{ex_ISS_cb}
    Consider the p53 signaling and dimerization network \cite{loriaux2013protein,nicholls2002biogenesis}
    \begin{equation*}
        \begin{tikzpicture}
            \node (a) at (0,0) {$\varnothing$};
            \node (b) at (2,0) {$X_1$};
            \node (c) at (2,2) {$X_1+X_2$};
            \node (d) at (0,2) {$X_2$};
            \draw[->] (a) -- (b) node[midway, above] {$u_1$};
            \draw[->] (b) -- (c) node[midway, right] {$u_2$};
            \draw[->] (c) -- (d) node[midway, above] {$u_3$};
            \draw[->] (d) -- (a) node[midway, left] {$u_4$};
            \node (x) at (2.7,-0.1) {,};
            \node (e) at (5,1) {$2X_1 \ce{<=>[$u_5$][$u_6$]} X_3$~,};
        \end{tikzpicture}
    \end{equation*}
    where $X_1,X_2,X_3$ represent p53 tumor suppressor, its negative regulator Mdm2, and its dimerization product, respectively. The first four reactions (left hand side) represent the negative feedback regulation process of the p53–Mdm2 system, while the last two reactions (right hand side) represent the dimerization process of p53. The dynamics of this time-varying MAS is
    \begin{equation}\label{eq_ex_CB}
        \left ( \begin{array}{c}
         \dot{x}_1 \\ \dot{x}_2 \\ \dot{x}_3
    \end{array} \right) =
     \left ( \begin{array}{c}
         u_1-u_3x_1x_2-2u_5x_1^2+2u_6x_3 \\
         u_2x_1-u_4x_2 \\
         u_5x_1^2-u_6x_3
    \end{array} \right).
    \end{equation}
    Clearly, the CRN is weakly reversible, and has 6 complexes, 2 linkage classes and $\dim \mathscr{S}=3$, which means its deficiency $\delta=6-2-3=1$. Moreover, the MAS is complex balanced if and only if there exists a $x^* \in \mathbb{R}^3_{>0}$ such that
    \begin{equation*}
        \begin{cases}
            u_2x_1^*=u_1 \\
            u_3x_1^*x_2^*=u_2x_1^* \\
            u_4x_2^*=u_3x_1^*x_2^* \\
            u_5(x_1^*)^2=u_6x_3^*
        \end{cases} \Longleftrightarrow ~
        u_1u_3=u_2u_4,~u_5,u_6>0,
    \end{equation*}
    i.e., the toric locus $\mathcal{T}=\{u \in \mathbb{R}^6_{>0}: u_1u_3=u_2u_4,~u_5,u_6>0\}$. By setting $u^*=(2,1,2,4,1,4)^{\top} \in \mathcal{T}$, we get $x^*=(2,0.5,1)^{\top}$. Then according to \cref{coro_cb_mas}, for any compact set $\mathbb{U} \subseteq \mathcal{T}$ containing $u^*$, this MAS, with input-value set $\mathbb{U}$, is semiglobal ISS with respect to $(x^*,u^*)$. To observe the ISS behavior intuitively, the readers may find the corresponding numerical simulations at the end of this paper. 
\end{example}



\section{ISS for non-weakly reversible CRNs}\label{section_4}
In this section, we further extend the analysis of ISS to non-weakly reversible CRNs. 

In general, it is difficult to capture the ISS property of a MAS only based on the dynamics. The above analysis indicates that the structure information of network is quite crucial in catching ISS. To fully utilize this point, we try to make a bridge between the dynamics of the concerned MAS and that of weakly reversible MAS by leveraging the notion of linear conjugacy.

\begin{definition}[Linear conjugacy \cite{johnston2011linear}]
    Two MASs $(\mathcal{N},u)$ and $(\tilde{\mathcal{N}},\tilde{u})$ are linearly conjugate if there exists a linear, bijective mapping $h:\mathbb{R}^n_{>0} \to \mathbb{R}^n_{>0}$ such that $h\left(\Phi(x_0,t)\right)=\tilde{\Phi}\left(\tilde{x}_0,t\right)$ for all $x_0 \in \mathbb{R}^n_{>0}$ and $\tilde{x}_0=h(x_0)$, where $\Phi(x_0,t)$ and $\tilde{\Phi}(\tilde{x}_0,t)$ denote the system orbits of the two MASs with initial values $x_0$ and $\tilde{x}_0$, respectively.
\end{definition}
As discussed in \cite{johnston2011linear}, linear conjugacy also means that there exists a positive, diagonal matrix $D=\mathrm{diag}(d_1,...,d_n)$ to link the dynamics of two MASs through $D\dot{x}=\dot{\tilde{x}}$, i.e.,
\begin{equation}\label{eq_def_LC_MAS}
    D\sum_{j=1}^r u_jx^{v_{\cdot j}}\left(v_{\cdot j}^{\prime}-v_{\cdot j} \right)=\sum_{j=1}^{\tilde{r}}\left(\tilde{u}_j\prod_{i=1}^nd_i^{\tilde{v}_{ij}}\right)x^{\tilde{v}_{\cdot j}}\left(\tilde{v}_{\cdot j}^{\prime}-\tilde{v}_{\cdot j} \right), \quad \forall~x \in \mathbb{R}^n_{\ge 0}.
\end{equation}
Specially, when $D$ is the identity matrix, $(\mathcal{N},u)$ and $(\tilde{\mathcal{N}},\tilde{u})$ are said to be \textit{dynamically equivalent} \cite{deshpande2023source}.

Utilizing the equivalent expression of dynamics (see (\ref{general dynamics_complex})) and denoting the set of reactant complexes by $\mathcal{C}_{react}=\{v_{\cdot 1},...,v_{\cdot r}\}\cup \{\tilde{v}_{\cdot 1},...,\tilde{v}_{\cdot \tilde{r}}\}$, we can rewrite (\ref{eq_def_LC_MAS}) as
\begin{equation*}
    D\sum_{z \in \mathcal{C}_{react}}x^z\sum_{\{j|v_{\cdot j}=z\}}u_j\left(v_{\cdot j}^\prime-v_{\cdot j}\right)=\sum_{z \in \mathcal{C}_{react}}x^z\sum_{\{j|\tilde{v}_{\cdot j}=z\}}\left(\tilde{u}_j\prod_{i=1}^nd_i^{z_{i}}\right) \left(\tilde{v}_{\cdot j}^{\prime}-\tilde{v}_{\cdot j} \right).
\end{equation*}
Note that $z$ is any complex in $\mathcal{C}_{react}$, so the above equation is equivalent to 
\begin{equation}\label{eq_def_LC}
    D\sum_{\{j|v_{\cdot j}=z\}}u_j\left(v_{\cdot j}^\prime-v_{\cdot j}\right)=\sum_{\{j|\tilde{v}_{\cdot j}=z\}}\left(\tilde{u}_j\prod_{i=1}^nd_i^{z_{i}}\right) \left(\tilde{v}_{\cdot j}^{\prime}-\tilde{v}_{\cdot j} \right), \quad \forall~z \in \mathcal{C}_{react}.
\end{equation}
Then we present the main theorem of this subsection, which characterizes the input-value set $\mathbb{U}$ using the concept of linear conjugacy.

\begin{theorem}\label{thm_ISS_LC}
For a time-varying MAS $(\mathcal{S},\mathcal{C},\mathcal{R},u)$ of (\ref{eq_time_varying_CRN}), suppose there is another time-varying MAS $(\mathcal{S},\tilde{\mathcal{C}},\tilde{\mathcal{R}},\tilde{u})$, which is weakly reversible and has toric locus $\mathcal{T}(\tilde{\mathcal{N}})$ with $\tilde{\mathcal{N}}=(\mathcal{S},\tilde{\mathcal{C}},\tilde{\mathcal{R}})$, to be linear conjugate to it, i.e., there exists a positive, diagonal matrix $D=\mathrm{diag}(d_1,...,d_n)$ $(d_i>0)$ to support (\ref{eq_def_LC}). Further, suppose that $\tilde{u}$ depends smoothly on $u \in \mathcal{U}$ with
    \begin{equation}\label{eq_U}
            \mathcal{U}=\{u \in \mathbb{R}^r_{>0}: ~ \exists~ \tilde{u} \in \mathcal{T}(\tilde{\mathcal{N}}) ~such~that~\forall~ z \in \mathcal{C}_{react} ~it ~holds~ (\ref{eq_def_LC})\}.
      \end{equation}
Then for any compact set $\mathbb{U} \subseteq \mathcal{U}$ and any $u^* \in \mathbb{U}$, the system (\ref{eq_time_varying_CRN}), with input-value set $\mathbb{U}$, is semiglobal ISS with respect to $(x^*,u^*)$, where $x^*$ satisfies $f(x^*;u^*)=0$.

\end{theorem}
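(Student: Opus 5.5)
The plan is to transfer the problem through the linear change of variables $\tilde x = Dx$. Under this map the original time-varying system becomes the weakly reversible system $(\tilde{\mathcal{N}},\tilde u(t))$, whose inputs lie in the toric locus. We then prove the ISS property there, essentially repeating the three steps of the proof of \cref{coro_cb_mas}, and pull it back.

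\textbf{Step 1: the transformed system.} Let $u(\cdot)$ be a $\mathbb{U}$-valued, piecewise locally Lipschitz input and set $\tilde u(t)=\tilde u(u(t))$. By the smoothness hypothesis, $\tilde u(\cdot)$ is again piecewise locally Lipschitz. Its range $\tilde{\mathbb{U}}=\tilde u(\mathbb{U})$ is compact, being the continuous image of a compact set, and it lies in $\mathcal{T}(\tilde{\mathcal N})$. Multiply (\ref{eq_def_LC}) by $x^z$ and sum over $z\in\mathcal C_{react}$. This shows that $\tilde x(t)=Dx(t)$ solves $\dot{\tilde x}=\tilde f(\tilde x;\tilde u(t))$, where $\tilde f$ is the mass-action vector field of $\tilde{\mathcal N}$ in the variable $\tilde x$. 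The identity holds pointwise in time, including at the switching instants of $u$.

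Because $D$ is positive diagonal, $\mathbb{R}^n_{>0}$, compact sets and bounded sets are all preserved, and $|Dx-Dy|$ is comparable to $|x-y|$ with constants $\min_i d_i$ and $\max_i d_i$. It follows that $\mathbb{R}^n_{>0}$-forward invariance and forward completeness of the original system are equivalent to the same properties for the transformed system. Those properties follow from Steps 1 and 2 of the proof of \cref{coro_cb_mas} applied to $\tilde{\mathcal N}$ with input set $\tilde{\mathbb U}$. There we use the Gronwall bound on $W$ built from $\tilde x^*(\tilde u(t))$, together with the $C^1$ dependence of $\tilde x^*$ on $\tilde u$ and the chain rule through the smooth map $u\mapsto\tilde u$.

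\textbf{Step 2: the stoichiometric subspaces.} I must check that the stoichiometric subspaces match. Equation (\ref{eq_def_LC}) shows that $D$ maps the span of the net reaction vectors of $\mathcal N$ onto the span of the net reaction vectors of $\tilde{\mathcal N}$. The kinetic subspaces therefore correspond. This is enough because trajectories of both systems stay in $x_0+$ (kinetic subspace).

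I will work with the kinetic subspace of $\tilde{\mathcal N}$. For complex balanced weakly reversible systems it equals $\tilde{\mathscr S}$, and uniqueness of the complex balanced equilibrium holds in each class of $\tilde{\mathscr S}$. Pulling back, $D^{-1}\tilde{\mathscr S}$ is the kinetic subspace of $\mathcal N$. If this differs from $\mathscr S$, ISS on $F\cap(x^*+\mathscr S)$ still follows from ISS on the finer classes intersected with $F$, provided $x^*$ is taken as the equilibrium in the class of $x_0$. This mirrors the convention already used in Step 2 of \cref{coro_cb_mas}, where $x^*(u)$ is chosen in $(x_0+\mathscr S)$. This matching of classes and equilibria is the step I expect to be the main technical obstacle.

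\textbf{Step 3: Lyapunov function and pullback.} Let $\tilde x^*=Dx^*$. Then $\tilde f(\tilde x^*;\tilde u^*)=Df(x^*;u^*)=0$, so $\tilde x^*$ is the complex balanced equilibrium for $\tilde u^*=\tilde u(u^*)$. On the original coordinates define
\[
V_0(x,u^*)=\sum_{i=1}^n\Big(d_ix_i\big(\ln(d_ix_i)-\ln(d_ix_i^*)-1\big)+d_ix_i^*\Big).
\]
Its gradient satisfies $\nabla_xV_0=D\,(\mathrm{Ln}(Dx)-\mathrm{Ln}(Dx^*))$, so
\[
\nabla_x^\top V_0\,f(x;u^*)=(\mathrm{Ln}\,\tilde x-\mathrm{Ln}\,\tilde x^*)^\top\tilde f(\tilde x;\tilde u^*)\le0,
\]
with equality if and only if $\tilde x=\tilde x^*$, i.e. $x=x^*$, within the class, by the complex balanced Lyapunov property. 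Conditions (1) and (2) of \cref{lem_ISS_Lya} follow from the corresponding properties of the pseudo-Helmholtz function composed with $D$. Uniqueness of the equilibrium in the class follows from result (4) for complex balanced systems.

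Applying \cref{lem_ISS_Lya} and then \cref{thm_ISS_lya} gives semiglobal ISS with respect to $(x^*,u^*)$. The perturbation term $I_2$ in that lemma is handled directly in the $u$-variables. It is bounded by $C_F|u-u^*|$, so no Lipschitz estimate for $\tilde u$ is needed there.
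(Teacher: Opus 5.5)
Your Step~3 is the paper's argument. The function $V_0$ you write down equals $\sum_i d_i\bigl(x_i(\ln x_i-\ln x_i^*-1)+x_i^*\bigr)$, which is exactly the function in the proof of \cref{lem_LCthm_2}. The identity $\nabla_x^{\top}V_0\,f(x;u^*)=\nabla_{\tilde x}^{\top}\tilde V_0(Dx,\tilde u^*)\,\tilde f(Dx;\tilde u^*)$ is the same computation, and the conclusion is drawn from \cref{lem_ISS_Lya} and \cref{thm_ISS_lya}. Your Step~1 is a genuine addition. \cref{thm_ISS_lya} assumes $\mathbb{R}^n_{>0}$-forward completeness, and the paper's proof of \cref{thm_ISS_LC} never checks it. Transporting trajectories by $\tilde x=Dx$ supplies it: this is valid pointwise in time, since (\ref{eq_def_LC}) holds for each value $u(t)$ with $\tilde u(u(t))$. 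You then rerun Steps~1--2 of the proof of \cref{coro_cb_mas} on the compact set $\tilde u(\mathbb U)\subseteq\mathcal T(\tilde{\mathcal N})$.

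The gap is your resolution in Step~2. You are right that $D^{-1}\tilde{\mathscr S}$, the kinetic subspace of $(\mathcal N,u)$, can be a proper subspace of $\mathscr S$. But choosing $x^*$ as the equilibrium of the finer class of $x_0$ does not prove the statement. In \cref{def_ISS} the point $x^*$ is fixed, and the estimate must hold for every $x_0\in F\cap(x^*+\mathscr S)\cap\mathbb R^n_{>0}$. With $u\equiv u^*$ it therefore forces $x(t)\to x^*$. This fails when $x_0$ lies in a different kinetic class carrying its own equilibrium, and the uniqueness hypothesis of \cref{lem_ISS_Lya} fails too. The moving equilibrium in Step~2 of \cref{coro_cb_mas} is only an auxiliary device for the Gronwall bound; it does not change the ISS target.

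No argument can repair this, because the theorem is false in that case. Take $\mathcal N$: $X_1\xrightarrow{u_1}X_2$, $X_2\xrightarrow{u_2}X_1$, $X_1+X_2\xrightarrow{u_3}2X_1+2X_2$, $X_1+X_2\xrightarrow{u_4}\varnothing$, and $\tilde{\mathcal N}$: $X_1\rightleftarrows X_2$, with $D=I$. Then $\mathcal T(\tilde{\mathcal N})=\mathbb R^2_{>0}$. Equation (\ref{eq_def_LC}) at $z=(1,1)^{\top}$ forces $u_3=u_4$, so $\mathcal U=\{u\in\mathbb R^4_{>0}:u_3=u_4\}$ with $\tilde u=(u_1,u_2)$ smooth. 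Meanwhile $\mathscr S=\mathbb R^2\neq\tilde{\mathscr S}$. For $\mathbb U=\{u^*\}$ with $u^*=(1,1,1,1)$, every point of the diagonal is an equilibrium. So $x_0=(2,2)$ stays at distance $\sqrt2$ from $x^*=(1,1)$ for all $t$, contradicting $\beta(|x_0-x^*|,t)+\gamma(0)\to0$.

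The missing ingredient is the hypothesis $D\mathscr S=\tilde{\mathscr S}$. This is equivalent to $\dim\mathscr S=\dim\tilde{\mathscr S}$, since $D^{-1}\tilde{\mathscr S}\subseteq\mathscr S$ always holds. Alternatively, replace $x^*+\mathscr S$ by $x^*+D^{-1}\tilde{\mathscr S}$ in the conclusion. The paper's proof uses this tacitly in Step~2 of the proof of \cref{lem_LCthm_1}, where it asserts $Dx^{**}\in Dx_0+\tilde{\mathscr S}$ for $x^{**}\in x_0+\mathscr S$. Once the hypothesis is added, your Steps~1 and~3 go through as written.
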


To prove \cref{thm_ISS_LC}, we introduce two lemmas with the proofs given in the Appendix.

\begin{lemma}\label{lem_LCthm_1}
    Suppose all the conditions in \cref{thm_ISS_LC} are satisfied. Then for any $u \in \mathcal{U}$ given in (\ref{eq_U}), the MAS $(\mathcal{S},\mathcal{C},\mathcal{R},u)$ has a unique equilibrium $x^{*}=x^*(u)$ in each positive stoichiometric compatibility class. Moreover, $x^*=x^*(u)$ depends smoothly on the parameter values $u \in \mathcal{U}$.
\end{lemma}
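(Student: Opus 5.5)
The argument transports equilibria through the conjugacy map $h(x)=Dx$. Fix $u\in\mathcal{U}$ and take $\tilde u=\tilde u(u)\in\mathcal{T}(\tilde{\mathcal{N}})$ satisfying (\ref{eq_def_LC}) for every $z\in\mathcal{C}_{react}$. Multiply (\ref{eq_def_LC}) by $x^z$ and sum over $z$. This gives the identity $D f(x;u)=\tilde f(Dx;\tilde u)$ for all $x\in\mathbb{R}^n_{>0}$, where $\tilde f$ is the vector field of $(\tilde{\mathcal N},\tilde u)$. The identity uses $(Dx)^z=\prod_i d_i^{z_i}x^z$, which is where the factor $\prod_i d_i^{z_i}$ comes from. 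Since $D$ is positive diagonal, $x$ is a positive equilibrium of $(\mathcal N,u)$ if and only if $\tilde x=Dx$ is a positive equilibrium of $(\tilde{\mathcal N},\tilde u)$.

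Next, the stoichiometric compatibility classes must correspond. The image of $f(\cdot;u)$ lies in $\mathscr S$ and that of $\tilde f(\cdot;\tilde u)$ lies in $\tilde{\mathscr S}$. Since the polynomials $x^z$ for distinct $z$ are linearly independent, (\ref{eq_def_LC}) holds complex by complex. Hence $D\mathscr S\subseteq\tilde{\mathscr S}$ in the sense that every reaction vector of $\mathcal N$, grouped by source complex, maps into $\tilde{\mathscr S}$. I would show equality, or at least that $D(x_0+\mathscr S)\cap\mathbb{R}^n_{>0}$ is exactly a positive class of $\tilde{\mathcal N}$, using the linear-conjugacy framework of \cite{johnston2011linear}, which identifies the kinetic subspaces. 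The cleanest route is to note that for complex-balanced $\tilde{\mathcal N}$ the span of the image of $\tilde f$ equals $\tilde{\mathscr S}$. With $D f=\tilde f\circ D$, this gives $D\,\mathrm{span}(\mathrm{Im} f)=\tilde{\mathscr S}$. I would then work with the kinetic subspace of $\mathcal N$, which is where trajectories actually live, in place of $\mathscr S$. This step is the main obstacle: if the kinetic subspace of $\mathcal N$ is strictly smaller than $\mathscr S$, uniqueness must be stated with respect to it, and I expect the paper to gloss over this.

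Granting the correspondence of classes, existence and uniqueness follow from result (4) on complex balanced systems. Since $\tilde u\in\mathcal{T}(\tilde{\mathcal N})$, each positive class $\tilde x_0+\tilde{\mathscr S}$ contains exactly one (complex balanced) equilibrium $\tilde x^*(\tilde u)$. Pulling back by $D^{-1}$ gives the unique equilibrium $x^*(u)=D^{-1}\tilde x^*(\tilde u(u))$ in $x_0+\mathscr S$.

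For smoothness, Corollary 3.13 of \cite{craciun2025structure} states that on the toric locus the complex balanced equilibrium in a fixed class depends smoothly on the rate constants. Composing this with the assumed smooth dependence $u\mapsto\tilde u$ and the linear map $D^{-1}$ shows that $x^*(u)$ is smooth on $\mathcal U$. Here $D$ is fixed, and the class is fixed via $Dx_0$.
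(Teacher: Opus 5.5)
Your argument follows the paper's proof step for step. Both use the identity $Df(x;u)=\tilde f(Dx;\tilde u)$, assembled complex by complex from (\ref{eq_def_LC}). Both pull back the complex balanced equilibrium as $x^*=D^{-1}\tilde x^*$, get uniqueness from result (4), and get smoothness by composing Corollary 3.13 of \cite{craciun2025structure} with $u\mapsto\tilde u$ and $D^{-1}$. The one difference is the class correspondence. You leave it ``granted''; the paper asserts it. Step 1 of the paper appeals to $\Phi(x_0,t)=D^{-1}\tilde\Phi(Dx_0,t)$, and Step 2 claims $Dx^{**}\in Dx_0+\tilde{\mathscr S}$ ``by an argument similar to Step 1''.

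Your suspicion is correct, and the problem is more than a gloss. What (\ref{eq_def_LC}) gives is $D\mathscr K=\tilde{\mathscr K}$, where $\mathscr K=\mathrm{span}\{\sum_{\{j|v_{\cdot j}=z\}}u_j(v'_{\cdot j}-v_{\cdot j}):z\in\mathcal{C}_{react}\}$ is the kinetic subspace of $(\mathcal N,u)$. For weakly reversible $\tilde{\mathcal N}$ one has $\tilde{\mathscr K}=\tilde{\mathscr S}$. This is the Feinberg--Horn coincidence theorem, and it needs a citation rather than being called clean. Together these give $D^{-1}\tilde{\mathscr S}=\mathscr K\subseteq\mathscr S$. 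Your phrase ``$D\mathscr S\subseteq\tilde{\mathscr S}$ in the sense that grouped reaction vectors map into $\tilde{\mathscr S}$'' therefore only says $D\mathscr K\subseteq\tilde{\mathscr S}$. Existence of an equilibrium in $(x_0+\mathscr S)\cap\mathbb R^n_{>0}$ is fine. Uniqueness there, however, needs $\mathscr S\subseteq\mathscr K$, and that can fail.

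Here is a concrete failure. Let $\mathcal N$ be $X_1\xrightarrow{u_1}2X_2$, $X_1\xrightarrow{u_2}0$, $X_2\xrightarrow{u_3}X_1$, and let $\tilde{\mathcal N}$ be $X_1\rightleftharpoons X_2$, with $D=I$. $\tilde{\mathcal N}$ has deficiency zero, so $\mathcal T(\tilde{\mathcal N})=\mathbb R^2_{>0}$. Then (\ref{eq_def_LC}) is solvable iff $u_1=u_2$, with $\tilde u=(2u_1,u_3)$. So $\mathcal U=\{u\in\mathbb R^3_{>0}:u_1=u_2\}\neq\varnothing$, $\tilde u$ depends linearly on $u$, and every hypothesis of \cref{thm_ISS_LC} holds. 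But $\mathscr S=\mathbb R^2$, so $\mathbb R^2_{>0}$ is a single positive class. For $u\in\mathcal U$ the system is $\dot x_1=-\dot x_2=-2u_1x_1+u_3x_2$, whose equilibria form the whole ray $u_3x_2=2u_1x_1$. The lemma is therefore false as stated. So is \cref{thm_ISS_LC}: with $u\equiv u^*$, a trajectory started at a second equilibrium on the ray never approaches $x^*$.

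Your argument, with the classes $x_0+D^{-1}\tilde{\mathscr S}$ in place of $x_0+\mathscr S$, is a complete proof of the corrected statement. These are also the sets in which trajectories with $u(t)\in\mathcal U$ actually evolve. Alternatively, one can add the hypothesis $D\mathscr S=\tilde{\mathscr S}$, equivalently $\mathscr K=\mathscr S$. This holds, for example, when each reactant complex of $\mathcal N$ is the source of exactly one reaction, as in \cref{ex_LC}. It also holds in the PAK1 example.
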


\begin{lemma}\label{lem_LCthm_2}
    Suppose all the conditions in \cref{thm_ISS_LC} are satisfied. Then for any compact set $\mathbb{U} \subseteq \mathcal{U}$ given in (\ref{eq_U}), there exists a function $V_0(x,u)$ satisfying condition (1)-(3) in \cref{lem_ISS_Lya}.
\end{lemma}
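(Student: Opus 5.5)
The natural candidate is the pseudo-Helmholtz free energy of the conjugate weakly reversible system, pulled back through the diagonal map $h(x)=Dx$. Fix a compact $\mathbb{U}\subseteq\mathcal{U}$. For $u\in\mathbb{U}$, let $\tilde{u}=\tilde{u}(u)\in\mathcal{T}(\tilde{\mathcal{N}})$ be the smoothly chosen rate vector from (\ref{eq_U}). Then $(\tilde{\mathcal{N}},\tilde u)$ is complex balanced.

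\textbf{Stoichiometric subspaces.} By (\ref{eq_def_LC}) and the weak reversibility of $\tilde{\mathcal N}$ (which makes $\tilde{\mathscr S}$ equal to the span of the realized net reaction vectors), $\tilde{\mathscr{S}}=D\mathscr{S}$. Hence $h$ maps each positive stoichiometric compatibility class $x^*+\mathscr{S}$ bijectively onto $Dx^*+\tilde{\mathscr S}$.

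\textbf{Definition of $V_0$.} Let $x^*(u)$ be the equilibrium from \cref{lem_LCthm_1} in the class under consideration, and set $\tilde x^*(u)=Dx^*(u)$. Since $D f(x;u)=\tilde f(Dx;\tilde u)$, the point $\tilde x^*(u)$ is an equilibrium of the complex balanced system $(\tilde{\mathcal N},\tilde u)$. By fact (2) it is complex balanced, and it is the unique one in its class. Define
\begin{equation*}
V_0(x,u)=\sum_{i=1}^n\Bigl(d_ix_i\bigl(\ln(d_ix_i)-\ln(d_ix_i^*(u))-1\bigr)+d_ix_i^*(u)\Bigr),
\end{equation*}
extended continuously to $\mathbb{R}^n_{\ge0}$ with $0\ln 0=0$.

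\textbf{Conditions (1) and (2).} Condition (1) follows from the smoothness of $u\mapsto x^*(u)$ in \cref{lem_LCthm_1}. For condition (2), fix $u^*$. The function $x\mapsto V_0(x,u^*)$ is a strictly convex function vanishing only at $x^*$, and it grows superlinearly as $|x|\to\infty$. Continuity on $\mathbb{R}^n_{\ge0}$ then gives radially unbounded class $\mathcal K_\infty$ bounds. Concretely, set $\underline\alpha(s)=\min_{|x-x^*|\ge s}V_0$ and $\overline\alpha(s)=\max_{|x-x^*|\le s}V_0$, and regularize each to be strictly increasing.

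\textbf{Condition (3).} Compute
\begin{equation*}
\nabla_x^\top V_0(x,u^*)f(x;u^*)=\bigl(\mathrm{Ln}(Dx)-\mathrm{Ln}(\tilde x^*)\bigr)^\top D f(x;u^*)=\bigl(\mathrm{Ln}(\tilde x)-\mathrm{Ln}(\tilde x^*)\bigr)^\top\tilde f(\tilde x;\tilde u^*),
\end{equation*}
where $\tilde x=Dx$. By the classical Horn--Jackson inequality for complex balanced systems, this is $\le0$. Equality holds iff $\mathrm{Ln}(\tilde x)-\mathrm{Ln}(\tilde x^*)\in\tilde{\mathscr S}^\perp$. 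Restricted to the compatibility class of $\tilde x^*$, this forces $\tilde x=\tilde x^*$, i.e. $x=x^*$, by the usual Birch-type uniqueness argument. This establishes \cref{lem_LCthm_2}.

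\textbf{Completion of \cref{thm_ISS_LC}.} It remains to prove forward invariance and forward completeness on $\mathbb{R}^n_{>0}$, after which \cref{thm_ISS_lya} applies directly. Forward invariance follows by conjugating: $\tilde x(t)=Dx(t)$ solves the $\tilde{\mathcal N}$ system with input $\tilde u(u(t))$. This input lies in a compact subset of $\mathcal T(\tilde{\mathcal N})$, because it is the continuous image of the compact set $\mathbb U$. Invariance then follows from Step 1 of \cref{coro_cb_mas}.

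For forward completeness, I would repeat the Gronwall argument of Step 2 of \cref{coro_cb_mas} with $W(t)=V_0(x(t),u(t))$. The required bound on $\dot x^*$ uses the smoothness from \cref{lem_LCthm_1} and compactness of $\mathbb U$. The bounds on $x_i^*$ and $1/x_i^*$ come from continuity of $x^*$ on the compact set $\mathbb U$.

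\textbf{Main obstacle.} I expect the hardest step to be justifying the identifications: $\tilde{\mathscr S}=D\mathscr S$, and the fact that the linearly conjugated equilibrium is the unique complex balanced one in the right class. This is needed so that the equality case in condition (3) holds only at $x^*$.
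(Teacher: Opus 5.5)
Your route is the paper's route. The $\ln d_i$ terms cancel, so your $V_0$ is exactly the paper's $\sum_i d_i\bigl(x_i(\ln x_i-\ln x_i^*-1)+x_i^*\bigr)$. Conditions (1) and (2) are checked the same way, and (3) rests on the same identity $\nabla_x^{\top}V_0(x,u^*)f(x;u^*)=\nabla_{\tilde x}^{\top}\tilde V_0(Dx,\tilde u^*)\tilde f(Dx;\tilde u^*)$.

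The gap is the step you flag yourself. Relation (\ref{eq_def_LC}) says that at each $z\in\mathcal C_{react}$ the net vector of $(\tilde{\mathcal N},\tilde u)$ equals $\bigl(\prod_i d_i^{-z_i}\bigr)D$ times the net vector $\sum_{\{j|v_{\cdot j}=z\}}u_j(v'_{\cdot j}-v_{\cdot j})$ of $(\mathcal N,u)$. Combined with weak reversibility of $\tilde{\mathcal N}$, this gives only $\tilde{\mathscr S}=D\mathscr K$, where $\mathscr K\subseteq\mathscr S$ is the span of the net vectors of $\mathcal N$ (its kinetic subspace). It does not give $\tilde{\mathscr S}=D\mathscr S$. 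For non-weakly reversible $\mathcal N$, which is exactly the case of interest, the inclusion can be strict. Then the equality case of (3) fails on $x^*+\mathscr S$ for \emph{any} $V_0$.

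Example: take $\mathcal N$: $X_1\xrightarrow{u_1}2X_2$, $X_1\xrightarrow{u_2}\varnothing$, $X_2\xrightarrow{u_3}X_1$, and $\tilde{\mathcal N}$: $X_1\rightleftharpoons X_2$ with $D=I$. Since $\tilde{\mathcal N}$ has deficiency zero, $\mathcal T(\tilde{\mathcal N})=\mathbb R^2_{>0}$. Relation (\ref{eq_def_LC}) holds iff $u_1=u_2$ and $\tilde u=(2u_1,u_3)^{\top}$, so $\mathcal U=\{u\in\mathbb R^3_{>0}:u_1=u_2\}\neq\varnothing$ and all hypotheses of \cref{thm_ISS_LC} hold. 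But $\mathscr S=\mathbb R^2$ while $\tilde{\mathscr S}=\mathrm{span}\{(-1,1)^{\top}\}$. Moreover $f(x;u^*)=(2u_1^*x_1-u_3^*x_2)(-1,1)^{\top}$ vanishes on a whole ray of positive equilibria inside $x^*+\mathscr S$, and there $\nabla_x^{\top}V_0\,f=0$ whatever $V_0$ is.

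The paper's proof carries the same hidden assumption. Step 2 of the proof of \cref{lem_LCthm_1} passes from $x^{**}\in x_0+\mathscr S$ to $Dx^{**}\in Dx_0+\tilde{\mathscr S}$ without justification. So you are not missing an idea the paper supplies, but your argument, like the paper's, is valid only after the statement is repaired. There are two ways to do this:
\begin{itemize}
\item Add the hypothesis $\dim\tilde{\mathscr S}=\dim\mathscr S$, equivalently that the net reaction vectors of $\mathcal N$ span $\mathscr S$.
\item Replace $\mathscr S$ throughout by $D^{-1}\tilde{\mathscr S}$. This is natural, since $f(x;u)=D^{-1}\tilde f(Dx;\tilde u(u))\in D^{-1}\tilde{\mathscr S}$ for every $u\in\mathcal U$. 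Hence the classes $x_0+D^{-1}\tilde{\mathscr S}$ are forward invariant even for time-varying inputs.
\end{itemize}
With either repair, $x\mapsto Dx$ maps the relevant class through $x^*$ onto $\tilde x^*+\tilde{\mathscr S}$. The rest of your proof, including the Birch-type equality argument, then goes through as written. Your closing paragraph on forward invariance and completeness is not needed for this lemma, though it fills a step the paper leaves implicit in proving \cref{thm_ISS_LC}. It needs the same repair, since $x^*(u(t))$ must be taken in the correct class.
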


\begin{proof}[Proof of \cref{thm_ISS_LC}]
    From \cref{lem_LCthm_1,lem_LCthm_2}, we know that all the conditions in \cref{lem_ISS_Lya} are satisfied. Then by \cref{thm_ISS_lya}, the conclusion is true.
\end{proof}

\begin{example}\label{ex_LC}
 We use the MAS $(\mathcal{N},u)$ discussed in \cite{johnston2011linear} to exhibit ISS, which takes 
    \begin{equation*}
        X_1+2X_2 \overset{u_1}{\longrightarrow} X_1+3X_2 \overset{u_2}{\longrightarrow}X_1+X_2 \overset{u_3}{\longrightarrow}3X_1,~2X_1 \overset{u_4}{\longrightarrow} X_2
    \end{equation*}
  with dynamics
    \begin{equation}\label{eq_ex_LC}
        \left ( \begin{array}{c}
         \dot{x}_1 \\ \dot{x}_2 
    \end{array} \right) =
     \left ( \begin{array}{c}
         2u_3x_1x_2-2u_4x_1^2 \\ u_1x_1x_2^2-2u_2x_1x_2^3-u_3x_1x_2+u_4x_1^2 
    \end{array} \right).
    \end{equation}
This MAS was said to be linear conjugate to the following MAS $(\tilde{\mathcal{N}},\tilde{u})$
    \begin{equation*}
       X_1+2X_2 \ce{<=>[\tilde{u}_1][\tilde{u}_2]} X_1+3X_2,~ X_1+X_2 \ce{<=>[\tilde{u}_3][\tilde{u}_4]} 2X_1
    \end{equation*}
with dynamics
    \begin{equation}\label{eq_ex_LC2}
        \left ( \begin{array}{c}
         \dot{x}_1 \\ \dot{x}_2 
    \end{array} \right) =
     \left ( \begin{array}{c}
         \tilde{u}_3x_1x_2-\tilde{u}_4x_1^2 \\ \tilde{u}_1x_1x_2^2-\tilde{u}_2x_1x_2^3-\tilde{u}_3x_1x_2+\tilde{u}_4x_1^2 
    \end{array} \right),
    \end{equation}
linked by $D=\mathrm{diag}(1,2)$. Note that the second MAS is weakly reversible, and has deficiency $\delta=4-2-2=0$, so we have $\mathcal{T}(\tilde{\mathcal{N}})=\mathbb{R}^4_{>0}$. Then (\ref{eq_def_LC}) follows
    \begin{equation}\label{eq_ex_LC3}
        \begin{cases}
             \left ( \begin{array}{cc}
         1 &  \\
           & 2 \\
    \end{array} \right)u_1 \left ( \begin{array}{c}
         0  \\
         1  \\
    \end{array} \right) = \tilde{u}_1 \cdot 4 \left ( \begin{array}{c}
         0  \\
         1  \\
    \end{array} \right), \\
            \left ( \begin{array}{cc}
         1 &  \\
           & 2 \\
    \end{array} \right)u_2 \left ( \begin{array}{c}
         0  \\
         -2  \\
    \end{array} \right) =  \tilde{u}_2 \cdot 8 \left ( \begin{array}{c}
         0  \\
         -1  \\
    \end{array} \right), \\
    \left ( \begin{array}{cc}
         1 &  \\
           & 2 \\
    \end{array} \right)u_3 \left ( \begin{array}{c}
         2  \\
         -1  \\
    \end{array} \right) = \tilde{u}_3 \cdot 2 \left ( \begin{array}{c}
         1  \\
         -1  \\
    \end{array} \right), \\
    \left ( \begin{array}{cc}
         1 &  \\
           & 2 \\
    \end{array} \right)u_4 \left ( \begin{array}{c}
         -2  \\
         1  \\
    \end{array} \right) =  \tilde{u}_4 \left ( \begin{array}{c}
         -1  \\
         1  \\
    \end{array} \right).
        \end{cases}  \Longleftrightarrow ~
        \begin{cases}
            u_1=2 \tilde{u}_1, \\
            u_2=2 \tilde{u}_2, \\
            u_3= \tilde{u}_3, \\
            u_4=\frac{1}{2} \tilde{u}_4.
        \end{cases}
    \end{equation}
    Therefore, according to (\ref{eq_U}) we have
    $$\mathcal{U}=\left\{u \in \mathbb{R}^4_{>0}:~ (u_1,u_2,u_3,u_4)=\left(2\tilde{u}_1,2\tilde{u}_2,\tilde{u}_3,\frac{1}{2}\tilde{u}_4\right), \tilde{u} \in \mathbb{R}^4_{>0}\right\}=\mathbb{R}^4_{>0}.$$
    By setting $u^*=(2,1,3,1)^{\top} \in \mathcal{U}$, we get $x^*=(1,3)^{\top}$. Then according to \cref{thm_ISS_LC}, for any compact set $\mathbb{U} \subseteq \mathbb{R}^4_{>0}$ containing $u^*$, the MAS (\ref{eq_ex_LC}), with input-value set $\mathbb{U}$, is semiglobal ISS with respect to $(x^*,u^*)$.
\end{example}

As shown in \textit{Example} \ref{ex_LC}, once the complex balanced MAS $(\tilde{\mathcal{N}},\tilde{u})$ and the linking matrix $D$ are specified, determining the set $\mathcal{U}$ reduces to solving a system of linear equations (like (\ref{eq_ex_LC3})). However, this system of linear equations may have no solution. That is, the set $\mathcal{U}$ in \cref{thm_ISS_LC} may be empty. To avoid this possibility, it is necessary to investigate the conditions under which $\mathcal{U}$ is nonempty. For this purpose, similar to the methods in \cite{johnston2011linear}, we define two sets
\begin{equation}
    C_{\mathcal{R},\mathbb{K}}(z)=\left\{\sum_{\{j|v_{\cdot j}=z\}}u_j\left(v_{\cdot j}^{\prime}-v_{\cdot j}\right): u \in \mathbb{K}\right\}
\end{equation}
and
\begin{equation}
    C_{\tilde{\mathcal{R}},\tilde{\mathbb{K}}}(z)=\left\{\sum_{\{j|\tilde{v}_{\cdot j}=z\}}\left(\tilde{u}_j\prod_{i=1}^nd_i^{z_i}\right)\left(\tilde{v}_{\cdot j}^{\prime}-\tilde{v}_{\cdot j}\right): \tilde{u} \in \tilde{\mathbb{K}} \right\},
\end{equation}
where $z \in \mathcal{C}_{react},~\mathbb{K} \subseteq \mathbb{R}^r_{>0}$ and $\tilde{\mathbb{K}}\subseteq \mathbb{R}^{\tilde{r}}_{>0}$.

\begin{proposition}
    The set $\mathcal{U}$ defined by (\ref{eq_U}) is not empty if and only if for every $z \in \mathcal{C}_{react}$ we have $D~C_{\mathcal{R},\mathbb{R}^r_{>0}}(z) \cap C_{\tilde{\mathcal{R}},\mathcal{T}(\tilde{\mathcal{N}})}(z) \ne \varnothing$, where $D~C_{\mathcal{R},\mathbb{R}^r_{>0}}(z)=\{D \xi : \xi \in C_{\mathcal{R},\mathbb{R}^r_{>0}}(z)\}$.
\end{proposition}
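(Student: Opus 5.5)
The plan is to exploit the fact that both sides of (\ref{eq_def_LC}) split according to reactant complexes. For each $z\in\mathcal{C}_{react}$, the left-hand side of (\ref{eq_def_LC}) involves only the coordinates $u_j$ with $v_{\cdot j}=z$. Likewise, the right-hand side involves only the coordinates $\tilde u_j$ with $\tilde v_{\cdot j}=z$. Since every reaction has exactly one reactant complex, the index sets $\{j\,|\,v_{\cdot j}=z\}$, $z\in\mathcal{C}_{react}$, partition $\{1,\dots,r\}$, and similarly for $\tilde{\mathcal R}$. If $z$ is not a reactant complex of $\mathcal{R}$ (resp.\ $\tilde{\mathcal R}$), the corresponding set $C_{\mathcal R,\cdot}(z)$ (resp.\ $C_{\tilde{\mathcal R},\cdot}(z)$) is $\{0\}$, so the empty-sum convention has to be carried along.

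\emph{Necessity.} Suppose $u\in\mathcal{U}$, and let $\tilde u\in\mathcal{T}(\tilde{\mathcal N})$ be a witness in (\ref{eq_U}). Fix $z\in\mathcal{C}_{react}$. The vector $\xi_z=\sum_{\{j|v_{\cdot j}=z\}}u_j(v'_{\cdot j}-v_{\cdot j})$ belongs to $C_{\mathcal R,\mathbb R^r_{>0}}(z)$. By (\ref{eq_def_LC}), $D\xi_z$ equals the right-hand side evaluated at $\tilde u$, which lies in $C_{\tilde{\mathcal R},\mathcal{T}(\tilde{\mathcal N})}(z)$. Hence the intersection is nonempty for every $z$; this direction is immediate.

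\emph{Sufficiency.} For each $z$, I pick $u^{(z)}\in\mathbb R^r_{>0}$ and $\tilde u^{(z)}\in\mathcal T(\tilde{\mathcal N})$ realizing a common point of $D\,C_{\mathcal R,\mathbb R^r_{>0}}(z)\cap C_{\tilde{\mathcal R},\mathcal{T}(\tilde{\mathcal N})}(z)$. I then glue them: define $u_j=u^{(z)}_j$ whenever $v_{\cdot j}=z$. Because the blocks are disjoint and $\mathbb R^r_{>0}$ is a product of its coordinate blocks, this gives a vector $u\in\mathbb R^r_{>0}$ whose $z$-block reproduces $\xi_z$ for every $z$. In the same way I set $\tilde u_j=\tilde u^{(z)}_j$ for $\tilde v_{\cdot j}=z$. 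Then (\ref{eq_def_LC}) holds for all $z$ simultaneously, and it remains only to check that $\tilde u\in\mathcal T(\tilde{\mathcal N})$.

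\emph{Main obstacle.} The step I expect to be hardest is exactly the membership $\tilde u\in\mathcal T(\tilde{\mathcal N})$. Unlike $\mathbb R^r_{>0}$, the toric locus is not in general a product over reactant-complex blocks, because complex balancing couples the blocks through a common equilibrium $\tilde x^*$. My first attempt would use the free scaling within each block. The condition at $z$ only fixes the vector $\sum_j \tilde u_j\prod_i d_i^{z_i}(\tilde v'_{\cdot j}-\tilde v_{\cdot j})$, and the left side can be rescaled by choosing $u^{(z)}$ freely. So I would try to fix a single reference $\tilde u^0\in\mathcal T(\tilde{\mathcal N})$ and show that each $D\,C_{\mathcal R,\mathbb R^r_{>0}}(z)$, being a cone, meets the $z$-block of $C_{\tilde{\mathcal R},\{\tilde u^0\}}(z)$ after adjusting within the cone. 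This would use the cone structure of $C_{\mathcal R,\mathbb R^r_{>0}}(z)$ together with invariance of $\mathcal T(\tilde{\mathcal N})$ under multiplication by positive scalars. Where this fails, I would restrict the gluing argument to situations in which $\mathcal T(\tilde{\mathcal N})$ does factor over the blocks, for instance deficiency zero, where $\mathcal T(\tilde{\mathcal N})=\mathbb R^{\tilde r}_{>0}$ as in \textit{Example}~\ref{ex_LC}. In that case the gluing argument closes the proof directly.
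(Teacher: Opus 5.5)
\paragraph{Where the proposal falls short.}
Your necessity direction and your gluing of $u$ are correct. You have also isolated exactly the step that the paper's one-line proof passes over: the paper asserts that the per-$z$ conditions are equivalent to the existence of one pair $(u,\tilde u)$ with $\tilde u\in\mathcal T(\tilde{\mathcal N})$, which tacitly treats the toric locus as a product over reactant blocks. Your proposal does not close this step, however. As written it proves sufficiency only when $\mathcal T(\tilde{\mathcal N})$ factors over blocks (e.g.\ deficiency zero), not the proposition as stated.

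The fixed-reference idea cannot work in general. $C_{\tilde{\mathcal R},\{\tilde u^0\}}(z)$ is a single vector whose direction is set by the ratios inside the block $(\tilde u^0_j)_{\tilde v_{\cdot j}=z}$. Rescaling $u$ only changes the length of the left-hand side of (\ref{eq_def_LC}), and multiplying $\tilde u^0$ by a common scalar leaves those ratios untouched. For instance, suppose $\tilde{\mathcal N}$ contains $X_1\to X_2$ and $X_1\to X_3$ with rates $\tilde u_1,\tilde u_2$, and the only reaction of $\mathcal N$ out of $X_1$ is $X_1\to X_2+X_3$. Then (\ref{eq_def_LC}) at $z=X_1$ forces $\tilde u_1:\tilde u_2=d_2:d_3$, so a reference with $\tilde u^0_1=\tilde u^0_2$ fails whenever $d_2\neq d_3$.

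\paragraph{The missing idea.}
$\mathcal T(\tilde{\mathcal N})$ is not a product, but it realizes every prescription of within-block ratios. Any $J\in\mathbb R^{\tilde r}_{>0}$ that is a circulation on the reaction graph of $\tilde{\mathcal N}$ (total rate into each complex equals total rate out of it) is complex balanced at $x^*=(1,\dots,1)^\top$, so $J\in\mathcal T(\tilde{\mathcal N})$.

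For each complex $z$ of $\tilde{\mathcal N}$, let $s_z=\sum_{\tilde v_{\cdot j}=z}\tilde u^{(z)}_j$. For the reaction $j$ from $z$ to $w$, put $P_{z,w}=\tilde u^{(z)}_j/s_z$. This $P$ is row-stochastic and, by weak reversibility, irreducible on each linkage class. Perron--Frobenius therefore gives a positive $\pi$ with $\pi P=\pi$. Then $J_j=\pi_zP_{z,w}$ is a positive circulation whose $z$-block is $(\pi_z/s_z)$ times the $z$-block of $\tilde u^{(z)}$.

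Equation (\ref{eq_def_LC}) at a fixed $z$ is linear in the $z$-blocks of $u$ and $\tilde u$, so it survives multiplying both blocks by $\pi_z/s_z$. Hence take $\tilde u=J$, and set $u_j=(\pi_z/s_z)u^{(z)}_j$ for $v_{\cdot j}=z$, with $u_j=u^{(z)}_j$ when $z$ is not a complex of $\tilde{\mathcal N}$. This pair satisfies (\ref{eq_def_LC}) for every $z$ at once, so $u\in\mathcal U$. With this lemma replacing your fallback, your gluing argument proves the proposition in full.
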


\begin{proof}
    The proof is trivial, since $D~C_{\mathcal{R},\mathbb{R}^r_{>0}}(z) \cap C_{\tilde{\mathcal{R}},\mathcal{T}(\tilde{\mathcal{N}})}(z) \ne \varnothing,~\forall~z \in \mathcal{C}_{react}$ if and only if $\exists~u \in \mathbb{R}^r_{>0},~\tilde{u} \in \mathcal{T}(\tilde{\mathcal{N}})$ such that (\ref{eq_def_LC}) holds.
\end{proof}

Another point worth noting is the condition under which $\mathcal{U}=\mathbb{R}^r_{>0}$, which implies that $\mathbb{U}$ can be any compact subset of positive orthant $\mathbb{R}^r_{>0}$.

\begin{proposition}\label{prop_LC_U=R}
    The set defined by (\ref{eq_U}) satisfies $\mathcal{U} = \mathbb{R}^r_{>0}$ if and only if for every $z \in \mathcal{C}_{react}$ we have $D~C_{\mathcal{R},\mathbb{R}^r_{>0}}(z) \subseteq C_{\tilde{\mathcal{R}},\mathcal{T}(\tilde{\mathcal{N}})}(z)$.
\end{proposition}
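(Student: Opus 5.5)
The plan is to prove the two inclusions between $\mathcal{U}$ and $\mathbb{R}^r_{>0}$ separately, in the same elementwise manner as the preceding proposition. The tool throughout is the block structure of (\ref{eq_def_LC}). For each $z\in\mathcal{C}_{react}$, the left-hand side involves only the coordinates $u_j$ with $v_{\cdot j}=z$, and the right-hand side only the coordinates $\tilde u_j$ with $\tilde v_{\cdot j}=z$. Because each reaction has exactly one reactant complex, these index blocks partition $\{1,\dots,r\}$, and likewise $\{1,\dots,\tilde r\}$. So $u\in\mathbb{R}^r_{>0}$ can be written as independent positive blocks $(u|_z)_{z}$, and a vector in $C_{\mathcal{R},\mathbb{R}^r_{>0}}(z)$ depends only on the block $u|_z$.

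\emph{Necessity.} Assume $\mathcal{U}=\mathbb{R}^r_{>0}$ and fix $z$ and $\xi\in C_{\mathcal{R},\mathbb{R}^r_{>0}}(z)$, so $\xi=\sum_{\{j|v_{\cdot j}=z\}}u_j(v'_{\cdot j}-v_{\cdot j})$ for some $u\in\mathbb{R}^r_{>0}$. Since $u\in\mathcal{U}$, the definition (\ref{eq_U}) gives some $\tilde u\in\mathcal{T}(\tilde{\mathcal{N}})$ satisfying (\ref{eq_def_LC}) for every complex, in particular for $z$. Hence $D\xi\in C_{\tilde{\mathcal{R}},\mathcal{T}(\tilde{\mathcal{N}})}(z)$, which is the required inclusion.

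\emph{Sufficiency.} Assume the inclusion holds for every $z$ and take any $u\in\mathbb{R}^r_{>0}$. For each $z$, the vector $D\sum_{\{j|v_{\cdot j}=z\}}u_j(v'_{\cdot j}-v_{\cdot j})$ lies in $C_{\tilde{\mathcal{R}},\mathcal{T}(\tilde{\mathcal{N}})}(z)$. So there is $\tilde u^{(z)}\in\mathcal{T}(\tilde{\mathcal{N}})$ for which (\ref{eq_def_LC}) holds at that $z$. Then glue: define $\tilde u$ blockwise by $\tilde u|_z:=\tilde u^{(z)}|_z$. By the block structure, this single $\tilde u$ satisfies (\ref{eq_def_LC}) simultaneously for all $z$. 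It remains to show $\tilde u\in\mathcal{T}(\tilde{\mathcal{N}})$, which would give $u\in\mathcal{U}$ and hence $\mathcal{U}=\mathbb{R}^r_{>0}$.

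\emph{Main obstacle.} The last step, showing the glued vector is still in the toric locus, is where I expect the difficulty. Each per-complex set $C_{\tilde{\mathcal{R}},\mathcal{T}(\tilde{\mathcal{N}})}(z)$ forgets the coupling between blocks, whereas complex balancing is a joint condition on all rate constants. When $\mathcal{T}(\tilde{\mathcal{N}})$ is the whole orthant, gluing is free, since any positive blocks give a member of $\mathcal{T}$. This covers the deficiency-zero case of the Deficiency Zero Theorem (as in \textit{Example}~\ref{ex_LC}).

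In general I would try to show that $\mathcal{T}(\tilde{\mathcal{N}})$ is stable under this blockwise recombination, for instance via its toric parametrization from \cite{craciun2025structure}. If that fails, I would read the condition in the statement in the same joint sense used in the proof of the preceding proposition: the witnesses $\tilde u$ for the various $z$ are taken as a common element of $\mathcal{T}(\tilde{\mathcal{N}})$. Under that reading, sufficiency becomes immediate.
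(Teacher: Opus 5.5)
Your necessity argument is correct and is the same as the paper's ``only if'' proof. The gap is in sufficiency: you never show that the glued vector $\tilde u$ lies in $\mathcal{T}(\tilde{\mathcal{N}})$. The paper's ``if'' proof does not supply this step either. It simply asserts that ``by the given condition'' a single $\tilde u\in\mathcal{T}(\tilde{\mathcal{N}})$ satisfies (\ref{eq_def_LC}) for all $z$ at once, which is exactly the gluing you flagged. In fact the step cannot be supplied, because the ``if'' direction is false as stated.

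Here is a counterexample. Take $\mathcal{N}=\tilde{\mathcal{N}}$ to be the p53 network of \textit{Example}~\ref{ex_ISS_cb}, with $D=I$.
\begin{itemize}
\item Each of the six reactant complexes is the source of exactly one reaction $j$. So (\ref{eq_def_LC}) reads $u_j(v'_{\cdot j}-v_{\cdot j})=\tilde u_j(v'_{\cdot j}-v_{\cdot j})$, which forces $\tilde u=u$. Hence $\mathcal{U}=\mathcal{T}(\tilde{\mathcal{N}})=\{u\in\mathbb{R}^6_{>0}:u_1u_3=u_2u_4\}\neq\mathbb{R}^6_{>0}$.
\item Every coordinate projection of this toric locus is all of $\mathbb{R}_{>0}$; for instance $(a,a,1,1,1,1)$ and $(1,1,a,a,1,1)$ lie in it for every $a>0$. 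So for each $z$, both $D\,C_{\mathcal{R},\mathbb{R}^6_{>0}}(z)$ and $C_{\tilde{\mathcal{R}},\mathcal{T}(\tilde{\mathcal{N}})}(z)$ equal the open ray $\mathbb{R}_{>0}(v'_{\cdot j}-v_{\cdot j})$, where $j$ is the unique reaction with $v_{\cdot j}=z$. The hypothesis therefore holds.
\item Concretely, for $u=(1,1,1,2,1,1)$ the per-complex witnesses exist, but they glue to $u\notin\mathcal{T}(\tilde{\mathcal{N}})$.
\end{itemize}
If you want $\mathcal{N}$ non-weakly reversible, replace $\varnothing\to X_1$ by $\varnothing\to 2X_1$ in $\mathcal{N}$ only. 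Then $\tilde u_1=2u_1$, $\mathcal{U}=\{2u_1u_3=u_2u_4\}$, and the per-$z$ inclusions still hold.

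This settles your two fallbacks.
\begin{itemize}
\item Your first fallback, showing that $\mathcal{T}(\tilde{\mathcal{N}})$ is stable under blockwise recombination, cannot work in general. For positive deficiency the toric locus couples the source-complex blocks through polynomial relations such as $u_1u_3=u_2u_4$.
\item Your second fallback is not a reading of the stated condition, since $C_{\tilde{\mathcal{R}},\mathcal{T}(\tilde{\mathcal{N}})}(z)$ is defined separately for each $z$. It is a stronger hypothesis that merely restates $\mathcal{U}=\mathbb{R}^r_{>0}$.
\end{itemize}

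What your gluing argument does prove is the ``if'' direction under an added assumption: that $\mathcal{T}(\tilde{\mathcal{N}})$ is the Cartesian product of its projections onto the source-complex blocks. This holds in particular when $\tilde{\mathcal{N}}$ is weakly reversible with zero deficiency, so that $\mathcal{T}(\tilde{\mathcal{N}})=\mathbb{R}^{\tilde r}_{>0}$. Both places where the paper applies the proposition (\textit{Example}~\ref{ex_LC} and the PAK1 network) fall in this case, so those conclusions survive. The proposition itself, however, needs that extra hypothesis.
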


\begin{proof}
    ``\textit{if}'': $\forall~u \in \mathbb{R}^r_{>0}$, by the given condition we know that $\exists~\tilde{u} \in \mathcal{T}(\tilde{\mathcal{N}})$ such that for every $z \in \mathcal{C}_{react}$, it holds (\ref{eq_def_LC}), which means $u \in \mathcal{U}$. Since $u$ is arbitrary, it follows that $\mathbb{R}^r_{>0} \subseteq \mathcal{U}$. It is obvious that $\mathcal{U} \subseteq \mathbb{R}^r_{>0}$, we thus have $\mathcal{U}=\mathbb{R}^r_{>0}$.

    ``\textit{only if}'': For every $z \in \mathcal{C}_{react}$, $\forall~\xi \in D~C_{\mathcal{R},\mathbb{R}^r_{>0}}(z)$, $\exists~u\in\mathbb{R}^r_{>0}$ such that $\xi=D\sum_{\{j|v_{\cdot j}=z\}}u_j (v_{\cdot j}^{\prime}-v_{\cdot j})$. Since $\mathcal{U}=\mathbb{R}^r_{>0}$, $\exists~\tilde{u} \in \mathcal{T}(\tilde{\mathcal{N}})$ such that
    $$\xi=D\sum_{\{j|v_{\cdot j}=z\}}u_j(v_{\cdot j}^{\prime}-v_{\cdot j})=\sum_{\{j|\tilde{v}_{\cdot j}=z\}}\left(\tilde{u}_j\prod_{i=1}^nd_i^{z_{i}}\right) \left(\tilde{v}_{\cdot j}^{\prime}-\tilde{v}_{\cdot j} \right),$$
    which means $\xi \in C_{\tilde{\mathcal{R}},\mathcal{T}(\tilde{\mathcal{N}})}(z)$. Since $\xi$ is arbitrary, we have $ D~C_{\mathcal{R},\mathbb{R}^r_{>0}}(z) \subseteq C_{\tilde{\mathcal{R}},\mathcal{T}(\tilde{\mathcal{N}})}(z)$.
\end{proof}

Revisit \textit{Example} \ref{ex_LC}, for $z=(1,2)^{\top} \in \mathcal{C}_{react}$, we can get
\begin{equation*}
    \begin{split}
        D~C_{\mathcal{R},\mathbb{R}^r_{>0}}(z)&=\left\{u_1
    \left ( \begin{array}{c}
         0  \\
         2  \\
    \end{array} \right):u \in \mathbb{R}^4_{>0}
    \right\} \\
    C_{\tilde{\mathcal{R}},\mathcal{T}(\tilde{\mathcal{N}})}(z)&=\left\{\tilde{u}_1
    \left ( \begin{array}{c}
         0  \\
         4  \\
    \end{array} \right):\tilde{u} \in \mathcal{T}(\tilde{\mathcal{N})}=\mathbb{R}^4_{>0}
    \right\}
    \end{split}
\end{equation*}
and thus $ D~C_{\mathcal{R},\mathbb{R}^r_{>0}}(z) \subseteq C_{\tilde{\mathcal{R}},\mathcal{T}(\tilde{\mathcal{N}})}(z)$. Similarly, one can verify that the above inclusion relation holds for any $z \in \mathcal{C}_{react}$. By \cref{prop_LC_U=R} we have $\mathcal{U}=\mathbb{R}^4_{>0}$.

\begin{proposition}\label{prop_LC_U=R_2}
    The set defined by (\ref{eq_U}) satisfies $\mathcal{U} = \mathbb{R}^r_{>0}$ if the following conditions hold:
    \begin{enumerate}
        \item[(1)] $\tilde{\mathcal{N}}$ is weakly reversible and has zero deficiency;
        \item[(2)] $r=\tilde{r}$, and $v_{\cdot i}\ne v_{\cdot j},~\forall~1\le i< j\le r$;
        \item[(3)] $\exists~u,\tilde{u} \in \mathbb{R}^r_{>0}$ such that (\ref{eq_def_LC}) holds.
    \end{enumerate}
\end{proposition}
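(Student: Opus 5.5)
Deficiency Zero Theorem (result (3) in Section 2) together with condition (1) gives $\mathcal{T}(\tilde{\mathcal{N}})=\mathbb{R}^{\tilde r}_{>0}$. By \cref{prop_LC_U=R} it then suffices to show that for every $z\in\mathcal{C}_{react}$,
\[
D\,C_{\mathcal{R},\mathbb{R}^r_{>0}}(z)\subseteq C_{\tilde{\mathcal{R}},\mathbb{R}^{\tilde r}_{>0}}(z).
\]
So the task is: given an arbitrary $u\in\mathbb{R}^r_{>0}$, produce some $\tilde u\in\mathbb{R}^{\tilde r}_{>0}$ satisfying (\ref{eq_def_LC}) for every $z$. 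Positivity of $\tilde u$ is the only constraint, since every positive vector is admissible.

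Condition (2) says the reactant complexes $v_{\cdot 1},\dots,v_{\cdot r}$ are pairwise distinct. Hence for each reactant complex $z=v_{\cdot j}$ of $\mathcal{N}$, the left-hand side of (\ref{eq_def_LC}) reduces to the single term $u_j D(v'_{\cdot j}-v_{\cdot j})$, which is linear in the single scalar $u_j$.

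Condition (3) supplies a reference pair $(u^0,\tilde u^0)$ of positive vectors satisfying (\ref{eq_def_LC}) for all $z$. I will use it to pin down the structure of $\tilde{\mathcal{N}}$.
\begin{itemize}
\item For a reactant complex $z$ of $\tilde{\mathcal N}$ that is not a reactant of $\mathcal N$, the left-hand side of (\ref{eq_def_LC}) is $0$. The right-hand side is then a positive combination of reaction vectors from $z$ summing to zero.
\item For $z=v_{\cdot j}$ that is not a reactant of $\tilde{\mathcal N}$, the right-hand side is $0$. This would force $u^0_jD(v'_{\cdot j}-v_{\cdot j})=0$, contradicting $v'_{\cdot j}\ne v_{\cdot j}$.
\end{itemize}
So every reactant of $\mathcal N$ is a reactant of $\tilde{\mathcal N}$. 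Since $r=\tilde r$ and each reactant of $\tilde{\mathcal N}$ carries at least one reaction, the $r$ reactions of $\tilde{\mathcal N}$ are spread over the $r$ distinct reactants $v_{\cdot 1},\dots,v_{\cdot r}$ with at least one each. Therefore each $v_{\cdot j}$ is the source of exactly one reaction $\tilde{v}_{\cdot j}\to\tilde{v}'_{\cdot j}$ after relabeling, and $\tilde{\mathcal N}$ has no other reactants. This also shows the first bullet case never occurs.

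Equation (\ref{eq_def_LC}) at $z=v_{\cdot j}$ now reads
\[
u_j D(v'_{\cdot j}-v_{\cdot j})=\tilde u_j\, d^{v_{\cdot j}}\,(\tilde v'_{\cdot j}-v_{\cdot j}),
\qquad d^{v_{\cdot j}}=\prod_i d_i^{(v_{\cdot j})_i}.
\]
At the reference point this gives $D(v'_{\cdot j}-v_{\cdot j})=\lambda_j(\tilde v'_{\cdot j}-v_{\cdot j})$, where
\[
\lambda_j=\frac{\tilde u^0_j\,d^{v_{\cdot j}}}{u^0_j}>0.
\]
For an arbitrary $u\in\mathbb{R}^r_{>0}$ I will set
\[
\tilde u_j=\frac{\lambda_j u_j}{d^{v_{\cdot j}}}=\frac{u_j\,\tilde u^0_j}{u^0_j}>0.
\]
This satisfies (\ref{eq_def_LC}) for every $z$, so $u\in\mathcal U$. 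Hence $\mathcal U=\mathbb{R}^r_{>0}$; equivalently, the inclusion required by \cref{prop_LC_U=R} holds.

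The main obstacle is the structural counting step, namely showing that $\tilde{\mathcal N}$ has exactly one reaction per reactant complex of $\mathcal N$. This is what makes (\ref{eq_def_LC}) a scalar proportionality for each $z$. One point to state carefully is that condition (2) is meant to index both networks' reactions over the same reactant set $\mathcal{C}_{react}$.
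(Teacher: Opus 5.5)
Your proposal is correct and follows the paper's proof essentially step for step. You show that every reactant of $\mathcal{N}$ is a reactant of $\tilde{\mathcal{N}}$ (otherwise $u_j D(v'_{\cdot j}-v_{\cdot j})=0$), use $r=\tilde r$ to force a one-to-one matching of reactions by source complex, and rescale the reference pair via $\tilde u_j = u_j\tilde u^0_j/u^0_j$, which is admissible because $\mathcal{T}(\tilde{\mathcal N})=\mathbb{R}^{r}_{>0}$ by the Deficiency Zero Theorem; your detour through the preceding proposition's inclusion criterion is harmless but unnecessary, since, like the paper, you verify $u\in\mathcal{U}$ directly.
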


\begin{proof}
    We complete the proof in three steps.

    \textit{Step 1.} Denote $\mathcal{C}_1=\{v_{\cdot 1},...,v_{\cdot r}\},~\tilde{\mathcal{C}}_1=\{\tilde{v}_{\cdot 1},...,\tilde{v}_{\cdot \tilde{r}}\}$, then we prove that $\mathcal{C}_1 \subseteq \tilde{\mathcal{C}}_1$. This proof is proceeded by contradiction. Suppose $\exists~1\le j\le r$ such that $v_{\cdot j} \notin \tilde{\mathcal{C}}_1$. According to condition (3) and $v_{\cdot i}\ne v_{\cdot j},~\forall~i\ne j$, we have $D u_j(v_{\cdot j}^{\prime}-v_{\cdot j})=\sum_{\{l|\tilde{v}_{\cdot l}=v_{\cdot j}\}}\tilde{u}_l\left(\prod_{i=1}^nd_i^{v_{ij}}\right)(\tilde{v}_{\cdot l}^{\prime}-\tilde{v}_{\cdot l})=0.$ Since $D$ is invertible and $v_{\cdot j}\ne v_{\cdot j}^{\prime}$, we can get $u_j=0$. This is in contradiction to $u \in \mathbb{R}^r_{>0}$. Therefore, we have $v_{\cdot j} \in \tilde{\mathcal{C}}_1,~\forall~1\le j\le r$, i.e., $\mathcal{C}_1 \subseteq \tilde{\mathcal{C}}_1$.

    \textit{Step 2.} We prove that $\mathcal{C}_1=\tilde{\mathcal{C}}_1$, and $\tilde{v}_{\cdot i}\ne \tilde{v}_{\cdot j},~\forall~1\le i<j\le \tilde{r}$. The proof is trivial, since $\mathcal{C}_1 \subseteq \tilde{\mathcal{C}}_1$ and $r=\tilde{r}$.

    \textit{Step 3.} We prove that $\mathcal{U}=\mathbb{R}^r_{>0}$. According to \textit{Step 2} we have $\mathcal{C}_{react}=\mathcal{C}_1=\tilde{\mathcal{C}}_1$. For simplicity and without loss of generality, we set $v_{\cdot j}=\tilde{v}_{\cdot j},~1\le j\le r$. For any $\mu \in \mathbb{R}^r_{>0}$, denote $c=(\frac{\mu_1}{u_1},...,\frac{\mu_r}{u_r})^{\top} \in \mathbb{R}^r_{>0}$, then $\forall~1\le j\le r$ we have
    \begin{equation*}
    \begin{split}
        &D\mu_j\left(v_{\cdot j}^{\prime}-v_{\cdot j}\right)=c_jDu_j\left(v_{\cdot j}^{\prime}-v_{\cdot j}\right) \\
        &= c_j \tilde{u}_j\left(\prod_{i=1}^nd_i^{v_{ij}}\right)(\tilde{v}_{\cdot j}^{\prime}-\tilde{v}_{\cdot j}) = \tilde{\mu}_j\left(\prod_{i=1}^nd_i^{v_{ij}}\right)(\tilde{v}_{\cdot j}^{\prime}-\tilde{v}_{\cdot j})
    \end{split}
    \end{equation*}
    where $\tilde{\mu}=(c_1\tilde{u}_1,...,c_r\tilde{u}_r)^{\top} \in \mathbb{R}^r_{>0}$. By condition (1) it follows $\mathbb{R}^r_{>0}=\mathcal{T}(\tilde{\mathcal{N}})$. Therefore, we have $\mu \in \mathcal{U}$. Since $\mu$ is arbitrary, it follows that $\mathbb{R}^r_{>0} \subseteq \mathcal{U}$. It is obvious that $\mathcal{U} \subseteq \mathbb{R}^r_{>0}$, we thus have $\mathcal{U}=\mathbb{R}^r_{>0}$.
\end{proof}

Recall that $\tilde{\mathcal{N}}$ in \textit{Example} \ref{ex_LC} is weakly reversible and has zero deficiency, and $r=\tilde{r}=4,~v_{\cdot i}\ne v_{\cdot j},~1\le i<j\le 4$. In addition, let $u=(2,2,1,1)^{\top},~\tilde{u}=(1,1,1,2)^{\top}$, and then (\ref{eq_def_LC}) holds. Then by \cref{prop_LC_U=R_2} we have $\mathcal{U}=\mathbb{R}^4_{>0}$.

It should be noted that all the above propositions are derived under the condition that $D$ and $\tilde{\mathcal{N}}$ are given. When only $\mathcal{N}$ is provided, finding a weakly reversible network $\tilde{\mathcal{N}}$ that is linearly conjugate to it under $D$ is not an easy task. This task is called ``linearly conjugate weakly reversible realizations'', and in the case $D=I$, it is also known as ``weakly reversible realizations''. The main approach to solve these problems is to reformulate it as a mixed-integer linear programming problem \cite{johnston2013computing,craciun2023algorithm}.

The network in \textit{Example} \ref{ex_LC} is an artificially constructed abstract network and does not necessarily have a direct biological interpretation. The following example establish the ISS of a biological network with practical significance.
\begin{example}
    Consider the p21-activated kinase 1 (PAK1) network
\begin{equation*}
    2X_1\overset{u_1}{\longrightarrow}X_1+X_2,\quad X_1 \overset{u_2}{\longleftarrow}X_2 \ce{<=>[$u_3$][$u_4$]} X_3,
\end{equation*}
where $X_1,X_2,X_3$ denote unphosphorylated PAK1, monophosphorylated PAK1, and diphosphorylated PAK1, respectively. The network plays a crucial role in the regulation of cell motility and morphology \cite{hong2021derivation}. The dynamics of this time-varying MAS is
\begin{equation}\label{eq_supp_ex}
        \left ( \begin{array}{c}
         \dot{x}_1 \\ \dot{x}_2 \\ \dot{x}_3
    \end{array} \right) =
     \left ( \begin{array}{c}
         -u_1x_1^2+u_2x_2\\
         u_1x_1^2-u_2x_2-u_3x_2+u_4x_3 \\
         u_3x_2-u_4x_3
    \end{array} \right).
\end{equation}
This MAS was said to be linear conjugate to the following MAS $(\tilde{\mathcal{N}},\tilde{u})$
\begin{equation*}
    2X_1 \ce{<=>[$\tilde{u}_1$][$\tilde{u}_2$]} X_2 \ce{<=>[$\tilde{u}_3$][$\tilde{u}_4$]} X_3
\end{equation*}
with dynamics
\begin{equation}
        \left ( \begin{array}{c}
         \dot{x}_1 \\ \dot{x}_2 \\ \dot{x}_3
    \end{array} \right) =
     \left ( \begin{array}{c}
         -2\tilde{u}_1x_1^2+2\tilde{u}_2x_2\\
         \tilde{u}_1x_1^2-\tilde{u}_2x_2-\tilde{u}_3x_2+\tilde{u}_4x_3 \\
         \tilde{u}_3x_2-\tilde{u}_4x_3
    \end{array} \right),
\end{equation}
linked by $D=\mathrm{diag}(2,1,1)$. Clearly, the second MAS is weakly reversible, and has deficiency $\delta=3-1-2=0$, so it follows $\mathcal{T}(\tilde{\mathcal{N}})=\mathbb{R}^4_{>0}$. Further, for $z=(2,0,0)$ we have
\begin{equation*}
\begin{split}
    D~C_{\mathcal{R},\mathbb{R}^r_{>0}}(z)&=\left\{u_1(-2,1,0)^{\top}:u \in \mathbb{R}^4_{>0} \right\} \\
    C_{\tilde{\mathcal{R}},\mathcal{T}(\tilde{\mathcal{N}})}(z)&=\left\{4\tilde{u}_1(-2,1,0)^{\top}:\tilde{u} \in \mathcal{T}(\tilde{\mathcal{N}})=\mathbb{R}^4_{>0} \right\}
\end{split}
\end{equation*}
and thus $D~C_{\mathcal{R},\mathbb{R}^r_{>0}}(z) \subseteq C_{\tilde{\mathcal{R}},\mathcal{T}(\tilde{\mathcal{N}})}(z)$. Similarly, one can verify that the above inclusion relation holds for any $z \in \mathcal{C}_{react}$. By \cref{prop_LC_U=R} we know that the set $\mathcal{U}$ defined in (\ref{eq_U}) satisfies $\mathcal{U}=\mathbb{R}^4_{>0}$. In addition, by setting $u^*=(1,4,2,5)^{\top}$, we obtain an equilibrium $x^*=(2,1,0.4)^{\top}$. Then according to \cref{thm_ISS_LC}, for any compact set $\mathbb{U} \subseteq \mathbb{R}^4_{>0}$ containing $u^*$, the MAS (\ref{eq_supp_ex}), with input-value set $\mathbb{U}$, is semiglobal ISS with respect to $(x^*,u^*)$. Also, the readers may refer to the end of this paper to find numerical simulations. 



\end{example}


\section{Application}\label{section_5}
Recently, CRNs have been widely used as a mathematical framework for studying molecular computations \cite{anderson2025chemical,fan2025automatic}. In this framework, the initial concentrations of certain species (i.e., $x_i(0)$) are treated as inputs, while the limiting steady state concentrations of other species (i.e., $\lim_{t \to \infty}x_j(t)$), are interpreted as outputs. By designing the resulting input-output relation to match a prescribed target function, one can theoretically implement the molecular computation of that function. Among these studies, an important question is: if two MASs $(\mathcal{N}_1,\kappa_1)$ and $(\mathcal{N}_2,\kappa_2)$ serve to compute functions $u=\sigma_1(w)$ and $x=\sigma_2(u)$, respectively, can their coupling compute the corresponding composite function $x=\sigma_2\circ \sigma_1(w)$? If the answer is affirmative, then $(\mathcal{N}_1,\kappa_1)$ and $(\mathcal{N}_2,\kappa_2)$ are ``dynamically composable'' \cite{jiang2026structure}.

Using the assumptions and techniques developed in \cite{jiang2026structure}, we can view the coupling dynamics as a time-varying system (\ref{eq_time_varying_CRN}), where the input $u(t)$ is generated by the dynamics of $(\mathcal{N}_1,\kappa_1)$, and the key issue in establishing dynamical composability reduces to \textbf{Question 2} in Section \ref{section_2}. The following theorem provides a partial answer to this question.

\begin{theorem}\label{thm_globally_stable}
    Consider a MAS $(\mathcal{S},\mathcal{C},\mathcal{R},u)$ of (\ref{eq_time_varying_CRN}) starting from $x(0)=x_0$, and suppose $x^*$ is its unique equilibrium in $(x_0+\mathscr{S})\cap \mathbb{R}^n_{>0}$ under $u(t) \equiv u^*\in \mathbb{R}^r_{>0}$. Then the limiting state of (\ref{eq_time_varying_CRN}) satisfies $\lim_{t \to \infty}x(t)=x^*$ if 
    \begin{enumerate}
        \item[(1)] $u(t) \in \mathbb{U},~\forall~t\ge 0$, where $\mathbb{U} \subseteq \mathbb{R}^r_{>0}$ is a compact set, and $\lim_{t \to \infty}u(t)=u^*$; 
        \item[(2)] the MAS of (\ref{eq_time_varying_CRN}), with input-value set $\mathbb{U}$, is semiglobal ISS with respect to $(x^*,u^*)$;
        \item[(3)] the MAS of (\ref{eq_time_varying_CRN}) is permanent, that is, there exists $\varepsilon>0$ such that for any initial value $x_0 \in \mathbb{R}^n_{>0}$, the solution of (\ref{eq_time_varying_CRN}) satisfies $\liminf_{t \to \infty} x_i(t)>\varepsilon$ and $\limsup_{t \to \infty} x_i(t) < 1/\varepsilon$ for all $i=1,2,...,n$.
    \end{enumerate}
\end{theorem}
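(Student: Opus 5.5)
The approach is a standard ``converging-input converging-state'' argument. We combine the semiglobal ISS estimate from condition (2) with a time-shift: the ISS inequality is applied not from time $0$ but from a large time $T$, after which the input is uniformly close to $u^*$. Permanence, condition (3), supplies a single compact set $F$ in which the trajectory eventually lives. On that set the ISS gains $\beta_F,\gamma_F$ are fixed and do not depend on $T$.

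\emph{Step 1: a compact invariant-in-the-limit set.} By condition (3) there is $T_0\ge0$ with $x(t)\in K_\varepsilon\triangleq[\varepsilon,1/\varepsilon]^n$ for all $t\ge T_0$. On $[0,T_0]$ the solution is continuous by forward completeness, which is part of (2), so it is bounded there. Hence
$$F\triangleq \overline{\{x(t):t\ge0\}}\cup\{x^*\}\cup K_\varepsilon$$
is a compact subset of $\mathbb{R}^n_{\ge0}$ containing $x^*$ and the whole trajectory. Moreover $x(t)\in(x_0+\mathscr{S})\cap\mathbb{R}^n_{>0}=(x^*+\mathscr{S})\cap\mathbb{R}^n_{>0}$, since $x^*$ lies in the class of $x_0$. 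Definition~\ref{def_ISS}(ii) then yields $\beta=\beta_F\in\mathcal{KL}$ and $\gamma=\gamma_F\in\mathcal{K}_\infty$.

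\emph{Step 2: time shift.} Fix $\eta>0$. Choose $T\ge T_0$ such that
$$\gamma\bigl(\mathrm{ess.sup.}_{t\ge T}|u(t)-u^*|\bigr)<\eta/2,$$
which is possible by condition (1) and continuity of $\gamma$ at $0$. The system is autonomous apart from the input, so $y(s)=x(T+s)$ solves (\ref{eq_time_varying_CRN}) with input $s\mapsto u(T+s)$, which is still $\mathbb{U}$-valued. The initial value $y(0)=x(T)$ lies in $F\cap(x^*+\mathscr{S})\cap\mathbb{R}^n_{>0}$, and $y$ stays in $F$. Applying the ISS estimate to $y$ gives
$$|x(T+s)-x^*|\le\beta(|x(T)-x^*|,s)+\eta/2.$$
Since $|x(T)-x^*|\le\operatorname{diam}F+|x^*|\triangleq R$ uniformly, we have $\beta(|x(T)-x^*|,s)\le\beta(R,s)\to0$. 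Choosing $S$ with $\beta(R,S)<\eta/2$ gives $|x(t)-x^*|<\eta$ for all $t\ge T+S$. As $\eta$ is arbitrary, $x(t)\to x^*$.

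\emph{Main obstacle.} The delicate point is legitimately invoking Definition~\ref{def_ISS} with constants independent of the time shift. Two things are needed. First, the compact set $F$ must be chosen a priori and contain the entire trajectory; this is exactly why permanence is assumed, since ISS alone only provides gains once the trajectory is known to stay in some compact set, and semiglobal ISS gains depend on $F$. Second, the shifted input must remain admissible, which holds because $\mathbb{U}$-valuedness and piecewise Lipschitzness are preserved under translation.

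There is a subtle issue if $x^*$ depended on the class, but here $x^*$ is fixed by hypothesis as the unique equilibrium in the class of $x_0$, so the shifted trajectory stays in $x^*+\mathscr{S}$. The lower bound in permanence is what keeps $x(T)$ in the positive orthant uniformly. The upper bound gives compactness of $F$; this is arguably already implied by continuity, but it is what makes $R$ uniform in $T$.
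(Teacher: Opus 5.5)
Your proof is correct and follows essentially the same argument as the paper: use permanence to obtain one compact set $F$ containing the whole trajectory, then apply the semiglobal ISS estimate from a shifted initial time, so that the input tail is small while $\beta$ decays. The differences are cosmetic. You use an $\eta$-dependent shift $T$ and bound $|x(T)-x^*|$ by the diameter of $F$, whereas the paper takes the shift $t/2$ and bounds $|x(t/2)-x^*|$ by a second application of the ISS estimate from time $0$.
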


\begin{proof}
  By condition (3), there exists a compact set $F \subseteq \mathbb{R}^n_{>0}$ such that $x(t) \in F,~\forall~t\ge 0$. Using condition (2) and treating $s\ge 0$ as the initial time, there exist a class $\mathcal{KL}$ function $\beta$ and a class $\mathcal{K}_{\infty}$ function $\gamma$ such that
    \begin{equation}\label{eq_pf3_1}
        |x(t)-x^*|\le \beta(|x(s)-x^*|,t-s)+\gamma\left(\mathrm{ess.sup}\{|u(\tau)-u^*|:\tau \ge s\}\right),
    \end{equation}
    where $0\le s\le t$. Substituting $s=\frac{t}{2}$ into (\ref{eq_pf3_1}) yields
    \begin{equation}\label{eq_pf3_2}
        |x(t)-x^*|\le \beta\left(|x(\frac{t}{2})-x^*|,\frac{t}{2}\right)+\gamma\left(\mathrm{ess.sup}\{|u(\tau)-u^*|:\tau \ge \frac{t}{2}\}\right).
    \end{equation}
    By $u(t) \to u^*$ we obtain $\gamma\left(\mathrm{ess.sup}\{|u(\tau)-u^*|:\tau \ge \frac{t}{2}\}\right) \to 0$ as $t \to \infty$. To estimate $|x(\frac{t}{2})-x^*|$, apply (\ref{eq_pf3_1}) with $s=0$ and $t$ replaced by $\frac{t}{2}$, then we have
    \begin{equation}\label{eq_pf3_3}
        |x(\frac{t}{2})-x^*|\le \beta\left(|x_0-x^*|,\frac{t}{2}\right)+\gamma\left(\mathrm{ess.sup}\{|u(\tau)-u^*|:\tau \ge 0\}\right).
    \end{equation}
    Since $u(t) \in \mathbb{U}$ is bounded, there exists a constant $M>0$ such that 
    \begin{equation}\label{eq_pf3_4}
        \gamma\left(\mathrm{ess.sup}\{|u(\tau)-u^*|:\tau \ge 0\}\right)\le M.
    \end{equation}
    In addition, since $\beta$ is a class $\mathcal{KL}$ function, for sufficiently large $t$, it holds
    \begin{equation}\label{eq_pf3_5}
        \beta \left(|x_0-x^*|,\frac{t}{2}\right)\le M.
    \end{equation}
    Substituting \cref{eq_pf3_3,eq_pf3_4,eq_pf3_5} into (\ref{eq_pf3_2}), we obtain
    \begin{equation}
        \begin{split}
            |x(t)-x^*|& \le \beta\left(2M,\frac{t}{2}\right)+\gamma\left(\mathrm{ess.sup}\{|u(\tau)-u^*|:\tau \ge \frac{t}{2}\}\right) \\
            & \to 0 \quad (t \to \infty)
        \end{split}
    \end{equation}
    that is, $\lim_{t \to \infty}x(t)=x^*$.
\end{proof}

\cref{thm_globally_stable} provides a sufficient condition for the global asymptotic stability of the time-varying system. The permanence condition involved in the assumptions has been widely used in the analysis of global asymptotic stability for MASs \cite{craciun2013persistence,gopalkrishnan2014geometric}.

\begin{example}\label{ex_global_stable}
    Revisit the CRNs in \textit{Example} \ref{ex_LC}. Consider an input $u:\mathbb{R}_{\ge 0} \to \mathbb{R}^r_{>0}$ that satisfies $\lim_{t \to \infty}u(t)=u^*$. Then $u(t)$ is bounded, and there exists a compact set $\mathbb{U} \subseteq \mathbb{R}^r_{>0}$ such that $u(t) \in \mathbb{U},~\forall~t\ge 0$. That is, the condition (1) in \cref{thm_globally_stable} holds. According to the discussion in \textit{Example} \ref{ex_LC}, the condition (2) also holds.
    
    To obtain the permanence of time-varying system (\ref{eq_ex_LC}) with initial value $x(0)=x_0$ and $u(t) \in \mathbb{U}$, consider the CRN $\tilde{\mathcal{N}}$ corresponding to time-varying system (\ref{eq_ex_LC2}) with initial value $\tilde{x}(0)=\mathrm{diag}(1,2)\cdot x_0$, where $(\tilde{u}_1,\tilde{u}_2,\tilde{u}_3,\tilde{u}_4)=\left(\frac{1}{2}u_1,\frac{1}{2}u_2,u_3,2u_4\right)$. Then it can be directly verified that $x(t)=\mathrm{diag}(1,1/2)\cdot \tilde{x}(t),~\forall~t\ge 0$, and $\tilde{u}(t) \in \tilde{\mathbb{U}}$, where $\tilde{\mathbb{U}}$ is a compact subset of $\mathbb{R}^4_{>0}$. In addition, since $\tilde{\mathcal{N}}$ is weakly reversible, by \textit{Theorem} 6.4 in \cite{craciun2013persistence} we know that $\tilde{x}(t)$ is permanent. Therefore, $x(t)$ is also permanent. That is, the condition (3) in \cref{thm_globally_stable} holds.

    Since all the conditions in \cref{thm_globally_stable} are satisfied, we have $\lim_{t \to \infty}x(t)=x^*$.
\end{example}

\textit{Example} \ref{ex_global_stable} shows that the time-varying system (\ref{eq_ex_LC}) satisfies $x(t) \to x^*$ if $u(t) \to u^*$. Seeing $u$ and $x$ as the upstream and downstream computational modules in molecular computations, respectively, this property indicates that the coupled system can implement a layer-by-layer computation through parallel chemical reactions. Specifically, consider an upstream computational module 
\begin{equation}\label{cascade_app1}
    U_2 \ce{<=>[$1$][$1$]} U_3, ~U_2 \overset{1}{\longrightarrow}U_2+U_4,~2U_4 \overset{1}{\longrightarrow} \varnothing
\end{equation}
with dynamics to be $(\dot{u}_1,\dot{u}_2,\dot{u}_3,\dot{u}_4)^{\top}=(0,u_3-u_2,u_2-u_3,u_2-2u_4^2)^{\top}$. This module implements the molecular computation of the function $v=\sigma_1(u)=\left(u_1,\frac{u_2+u_3}{2}, \frac{u_2+u_3}{2},\frac{\sqrt{u_2+u_3}}{2}\right)^{\top}$. The downstream computational module
\begin{equation}\label{cascade_app2}
    \begin{split}
        &X_1+2X_2+U_1 \overset{1}{\longrightarrow} X_1+3X_2+U_1,~2X_1+U_4 \overset{1}{\longrightarrow} X_2+U_4, \\
    &X_1+3X_2+U_2 \overset{1}{\longrightarrow}X_1+X_2+U_2,~X_1+X_2+U_3 \overset{1}{\longrightarrow}3X_1+U_3,
    \end{split}
\end{equation}
which has dynamics (\ref{eq_ex_LC}), implements the molecular computation of $x=\sigma_2(v)=\left(\frac{v_1v_3}{2v_2v_4},\frac{v_1}{2v_2}\right)^{\top}$. By \textit{Example} \ref{ex_global_stable} it is concluded that the coupled system below
\begin{equation}\label{coulped_app}
\begin{split}
    &U_2 \ce{<=>[$1$][$1$]} U_3, ~U_2 \overset{1}{\longrightarrow}U_2+U_4,~2U_4 \overset{1}{\longrightarrow} \varnothing,\\ 
    &X_1+2X_2+U_1 \overset{1}{\longrightarrow} X_1+3X_2+U_1,~2X_1+U_4 \overset{1}{\longrightarrow} X_2+U_4, \\
    &X_1+3X_2+U_2 \overset{1}{\longrightarrow}X_1+X_2+U_2,~X_1+X_2+U_3 \overset{1}{\longrightarrow}3X_1+U_3,
\end{split}
\end{equation}
with dynamics to be 
\begin{equation}\label{eq_coupled_system}
    \left ( \begin{array}{c}
    \dot{u}_1 \\ \dot{u}_2 \\ \dot{u}_3 \\ \dot{u}_4 \\
         \dot{x}_1 \\ \dot{x}_2 
    \end{array} \right) =
     \left ( \begin{array}{c}
     0 \\ u_3-u_2 \\ u_2-u_3 \\ u_2-2u_4^2 \\
         2u_3x_1x_2-2u_4x_1^2 \\ u_1x_1x_2^2-2u_2x_1x_2^3-u_3x_1x_2+u_4x_1^2 
    \end{array} \right),
\end{equation}
can implement the molecular computation of the composite function $\sigma_2\circ \sigma_1$, i.e., $(x_1,x_2)^{\top}=\sigma_2(\sigma_1(u))=\left(\frac{u_1}{\sqrt{u_2+u_3}},\frac{u_1}{u_2+u_3}\right)^{\top}$. This suggests the computation by chemical reactions in (\ref{coulped_app}) follows the same result as given by cascading the computational result of reactions in (\ref{cascade_app1}) and that of reactions in (\ref{cascade_app2}). The corresponding numerical simulations are given at the end of this paper.




\section{Conclusions}\label{section_6}
In this paper, we investigate the ISS of CRNs with time-varying reaction-rate inputs, building on the work of Chaves and Sontag. For mass-action kinetics, we first extend the established ISS results for weakly reversible networks with zero deficiency and single linkage class to a broader class of weakly reversible networks. In particular, by restricting the input-value set to a compact subset of the toric locus, we establish semiglobal ISS for weakly reversible networks that have arbitrary deficiency and linkage classes. Then, by exploiting the concept of linear conjugacy, we further extend these ISS results to a class of networks that are not weakly reversible, even though their toric locus are empty. Finally, we apply the above results to molecular computations. We show that, when the reaction-rate input converges to a constant value and the corresponding trajectory is permanent, the semiglobal ISS property guarantees convergence of the state to the equilibrium associated with the limiting input, thereby providing a theoretical foundation for the implementation of parallel molecular computations. A concrete example illustrates the application of the proposed theory to molecular computations.

Several directions deserve further investigation. First, according to the results established in Section \ref{section_4}, the set $\mathcal{U}$ determines the admissible range of the input $u(\cdot)$. Therefore, it is necessary to develop efficient computational methods for determining an admissible, or even the maximal admissible, input set $\mathcal{U}$ for a given CRN. Second, the notion of ISS can be extended to CRNs with time delays, and one may further investigate structural conditions under which delayed networks possess the ISS property. In addition, extending the analysis to other classes of kinetics, like Michaelis–Menten kinetics, would also be of both theoretical and practical significance.


\appendix
\section{Proofs} 

\begin{proof}[\textbf{Proof of \cref{lem_LCthm_1}}]
    We complete the proof in three steps.

    \textit{Step 1.} We prove the existence of the equilibrium in $(x_0+\mathscr{S})\cap \mathbb{R}^n_{>0}$. For any $u \in \mathcal{U}$, we know that there exists a $\tilde{u}$ such that $\forall~v \in \mathcal{C}_{react}$ it holds (\ref{eq_def_LC}) and $(\tilde{\mathcal{N}},\tilde{u})$ is complex balanced, which means there exists an equilibrium in each positive stoichiometric compatibility class. Specifically, suppose $\tilde{x}^* \in (Dx_0+\tilde{\mathscr{S}})\cap \mathbb{R}^n_{>0}$ is the equilibrium of $(\tilde{\mathcal{N}},\tilde{u})$ and thus satisfies $\sum_{j=1}^{\tilde{r}}\tilde{u}_j(\tilde{x}^*)^{\tilde{v}_{\cdot j}}(\tilde{v}_{\cdot j}^{\prime}-\tilde{v}_{\cdot j})=0$. Then we have
    \begin{equation}
        \begin{split}
            &D\sum_{j=1}^ru_j\left(D^{-1}\tilde{x}^*\right)^{v_{\cdot j}}\left(v_{\cdot j}^{\prime}-v_{\cdot j}\right)
            =D\sum_{j=1}^ru_j\left(\tilde{x}^*\right)^{v_{\cdot j}}\left(\prod_{i=1}^nd_i^{-v_{ij}}\right)\left(v_{\cdot j}^{\prime}-v_{\cdot j}\right) \\
            =&\sum_{v \in \mathcal{C}_{react}}(\tilde{x}^*)^{v}\left(\prod_{i=1}^nd_i^{-v_{i}}\right)D\sum_{\{j~:~v_{\cdot j}=v\}}u_j\left(v_{\cdot j}^{\prime}-v_{\cdot j}\right) \\
            =& \sum_{v \in \mathcal{C}_{react}}(\tilde{x}^*)^{v}\left(\prod_{i=1}^nd_i^{-v_{i}}\right)\sum_{\{j~:~\tilde{v}_{\cdot j}=v\}}\left(\tilde{u}_j\prod_{i=1}^nd_i^{v_i}\right)\left(\tilde{v}_{\cdot j}^{\prime}-\tilde{v}_{\cdot j}\right) \\
            =& \sum_{v \in \mathcal{C}_{react}}(\tilde{x}^*)^{v}\sum_{\{j~:~\tilde{v}_{\cdot j}=v\}}\tilde{u}_j\left(\tilde{v}_{\cdot j}^{\prime}-\tilde{v}_{\cdot j}\right) = \sum_{j=1}^{\tilde{r}}\tilde{u}_j(\tilde{x}^*)^{\tilde{v}_{\cdot j}}\left(\tilde{v}_{\cdot j}^{\prime}-\tilde{v}_{\cdot j}\right)=0
         \end{split}
    \end{equation}
    with the third equality holding due to (\ref{eq_def_LC}). Since $D$ is invertible, we can get $\sum_{j=1}^ru_j\left(x^*\right)^{v_{\cdot j}}\left(v_{\cdot j}^{\prime}-v_{\cdot j}\right)=0$, where $x^*=D^{-1}\tilde{x}^*$. This means that $x^*$ is an equilibrium of $(\mathcal{N},u)$. In addition, by the definition of linear conjugacy, we have $\Phi (x_0,t)=D^{-1}\tilde{\Phi}(Dx_0,t)$. As $\tilde{x}^* \in  (Dx_0+\tilde{\mathscr{S}})\cap \mathbb{R}^n_{>0}$, we have $x^* \in (x_0+\mathscr{S})\cap \mathbb{R}^n_{>0}$.

    \textit{Step 2.} We prove the uniqueness of the equilibrium in $(x_0+\mathscr{S})\cap \mathbb{R}^n_{>0}$. Suppose that $x^{**} \in (x_0+\mathscr{S})\cap \mathbb{R}^n_{>0}$ satisfies $\sum_{j=1}^ru_j\left(x^{**}\right)^{v_{\cdot j}}\left(v_{\cdot j}^{\prime}-v_{\cdot j}\right)=0$. By an argument similar to that in \textit{Step 1}, we obtain $\sum_{j=1}^{\tilde{r}}\tilde{u}_j\left(Dx^{**}\right)^{\tilde{v}_{\cdot j}}\left(\tilde{v}_{\cdot j}^{\prime}-\tilde{v}_{\cdot j} \right)=0$, and $Dx^{**} \in (Dx_0+\tilde{\mathscr{S}})\cap \mathbb{R}^n_{>0}$. Since $(\tilde{\mathcal{N}},\tilde{u})$ is complex balanced, there is a unique equilibrium in each positive stoichiometric compatibility class. Thus we have $Dx^{**}=\tilde{x}^*=Dx^*$, i.e., $x^{**}=x^*$.

    \textit{Step 3.} We establish the smooth dependence property. Since $x^*=D^{-1}\tilde{x}^*$, we can obtain that $x^*$ depends smoothly on $\tilde{x}^*$. According to the \textit{Corollary} 3.13 in \cite{craciun2025structure}, we know that $\tilde{x}^*=\tilde{x}^{*}(\tilde{u})$ depends smoothly on $\tilde{u} \in \mathcal{T}(\tilde{\mathcal{N}})$. From the assumption in \cref{thm_ISS_LC} it follows that $\tilde{u}$ depends smoothly on $u \in \mathcal{U}$. Therefore, we know that $x^*=x^*(u)$ depends smoothly on the parameter values $u \in \mathcal{U}$.
\end{proof}

\begin{proof}[\textbf{Proof of \cref{lem_LCthm_2}}]
    For any compact set $\mathbb{U} \subseteq \mathcal{U}$, denote $V_0(x,u)=\sum_{i=1}^nd_i(x_i \\(\ln x_i-\ln x_i^*-1)+x_i^*),$ where $x^*=x^*(u),~u\in \mathbb{U}$ is defined as in \cref{lem_LCthm_1}.

    According to \cref{lem_LCthm_1}, we know that $x^*=x^*(u)$ depends smoothly on the parameter values $u \in \mathbb{U}$. Therefore, the restriction of $V_0$ to $\mathbb{R}^n_{>0} \times \mathbb{U}$ is continuously differentiable, and the condition (1) in \cref{lem_ISS_Lya} holds.

    For any $u^* \in \mathbb{U}$, it is straightforward to verify that $V_0(x,u^*)\ge 0$, with the equality holding if and only if $x=x^*(u^*)$. Denote
    \begin{equation*}
        \underline{\alpha}_{u^*}(s)=\inf_{|x-x^*|\ge s}V_0(x,u^*),\quad \overline{\alpha}_{u^*}(s)=\sup_{|x-x^*|\le s}V_0(x,u^*),\quad s\ge 0,
    \end{equation*}
    which satisfy $\underline{\alpha}_{u^*}(0)=\overline{\alpha}_{u^*}(0)=0$, and are continuous, positive definite, and strictly increasing (and thus class $\mathcal{K}$ functions). Then it holds 
    $\underline{\alpha}_{u^*}(|x-x^*|) \le V_0(x,u^*) \le \overline{\alpha}_{u^*}(|x-x^*|),~x \in \mathbb{R}^n_{\ge 0}.$ 
    Moreover, since $V_0(x,u^*)\to +\infty$ as $|x-x^*| \to +\infty$, we have $\underline{\alpha}_{u^*}(s)\to +\infty,~\overline{\alpha}_{u^*}(s)\to +\infty$. Therefore, $\underline{\alpha}_{u^*}$ and $\overline{\alpha}_{u^*}$ are class $\mathcal{K}_{\infty}$ functions, and the condition (2) in \cref{lem_ISS_Lya} holds.

    For any $u^* \in \mathbb{U}$, by the proof of \cref{lem_LCthm_1}, there exists a unique equilibrium $x^*=x^*(u^*)$ for the MAS $\dot{x}=f(x;u^*)$ in each positive stoichiometric compatibility class. Moreover, there exists a $\tilde{u}^* \in \mathcal{T}(\tilde{\mathcal{N}})$ such that $x^*=D^{-1}\tilde{x}^*$, where $\tilde{x}^*=\tilde{x}^*(\tilde{u}^*)$ is the complex balanced equilibrium of system $\dot{\tilde{x}}=\tilde{f}(\tilde{x};\tilde{u}^*)=\sum_{j=1}^{\tilde{r}}\tilde{u}_j\tilde{x}^{\tilde{v}_{\cdot j}}(\tilde{v}_{\cdot j}^{\prime}-\tilde{v}_{\cdot j})$. As $\tilde{x}^*$ is the complex balanced equilibrium, let $\tilde{V}_0(\tilde{x},\tilde{u}^*)=\sum_{i=1}^n(\tilde{x}_i(\ln \tilde{x}_i-\ln \tilde{x}_i^*-1)+\tilde{x}_i^*)$, then it follows $\nabla^{\top}_{\tilde{x}}\tilde{V}_0(\tilde{x},\tilde{u}^*)\tilde{f}(\tilde{x};\tilde{u}^*)\le 0$, with the equality holding if and only if $\tilde{x}=\tilde{x}^*$. Since
    \begin{equation*}
        \begin{split}
            &\nabla_{x}^{\top}V_0(x,u^*)=\left(d_1(\ln x_1-\ln x_1^*), ... ,d_n(\ln x_n-\ln x_n^*) \right) \\
            =&(\ln (d_1x_1)-\ln \tilde{x}_1^*,..., \ln (d_nx_n)-\ln \tilde{x}_n^*)~D =\nabla_{\tilde{x}}^{\top}\tilde{V}_0(Dx,\tilde{u}^*)~D
        \end{split}
    \end{equation*}
    and 
    \begin{equation*}
        \begin{split}
            f(x;u^*)&=\sum_{j=1}^ru_j^*x^{v_{\cdot j}}\left(v_{\cdot j}^{\prime}-v_{\cdot j}\right)=\sum_{v \in \mathcal{C}_{react}}x^v \sum_{\{j|v_{\cdot j}=v\}}u_{j}^*\left(v_{\cdot j}^{\prime}-v_{\cdot j}\right) \\
            & =\sum_{v \in \mathcal{C}_{react}}x^v D^{-1}\sum_{\{j|\tilde{v}_{\cdot j}=v\}}\left(\tilde{u}_j^*\prod_{i=1}^nd_i^{v_i}\right)\left(\tilde{v}_{\cdot j}^{\prime}-\tilde{v}_{\cdot j}\right) \\
            &= D^{-1}\sum_{v \in \mathcal{C}_{react}}(Dx)^v \sum_{\{j|\tilde{v}_{\cdot j}=v\}}\tilde{u}_j^*\left(\tilde{v}_{\cdot j}^{\prime}-\tilde{v}_{\cdot j}\right) \\
            & = D^{-1}\sum_{j=1}^{\tilde{r}}\tilde{u}_j^*(Dx)^{\tilde{v}_{\cdot j}}\left(\tilde{v}_{\cdot j}^{\prime}-\tilde{v}_{\cdot j}\right)= D^{-1}\tilde{f}(Dx;\tilde{u}^*)
        \end{split}
    \end{equation*}
    we thus have $\nabla_{x}^{\top}V_0(x,u^*)f(x;u^*)=\nabla_{\tilde{x}}^{\top}\tilde{V}_0(Dx,\tilde{u}^*)\tilde{f}(Dx;\tilde{u}^*)\le 0$, with the equality holding if and only if $Dx=\tilde{x}^*$, i.e., $x=D^{-1}\tilde{x}^*=x^*$, and the condition (3) in \cref{lem_ISS_Lya} holds.
\end{proof}



\bibliographystyle{siamplain}
\bibliography{references}

\clearpage

\thispagestyle{plain}

\null
\begin{center}
{\bfseries\MakeUppercase{Numerical Simulations}\par}
\end{center}
\par
\vskip .11in

To observe the ISS behavior and the parallel computation result more intuitively, we make some numerical simulations for the examples presented in this paper.

\cref{FigCB} shows the numerical simulation results for the system (\ref{eq_ex_CB}) with two kinds of different inputs starting from the same initial point $x_0=(3.7,1.4,0.2)^{\top}$. As can be seen, for the first kind of input (converging to $u^*$), the state will converge to $x^*$, while for the second kind of input (oscillating along $u^*$), the long-term behavior of the state is controlled by a function of $|u-u^*|$.

\begin{figure}[ht]
    \centering
    \includegraphics[width=0.99\textwidth]{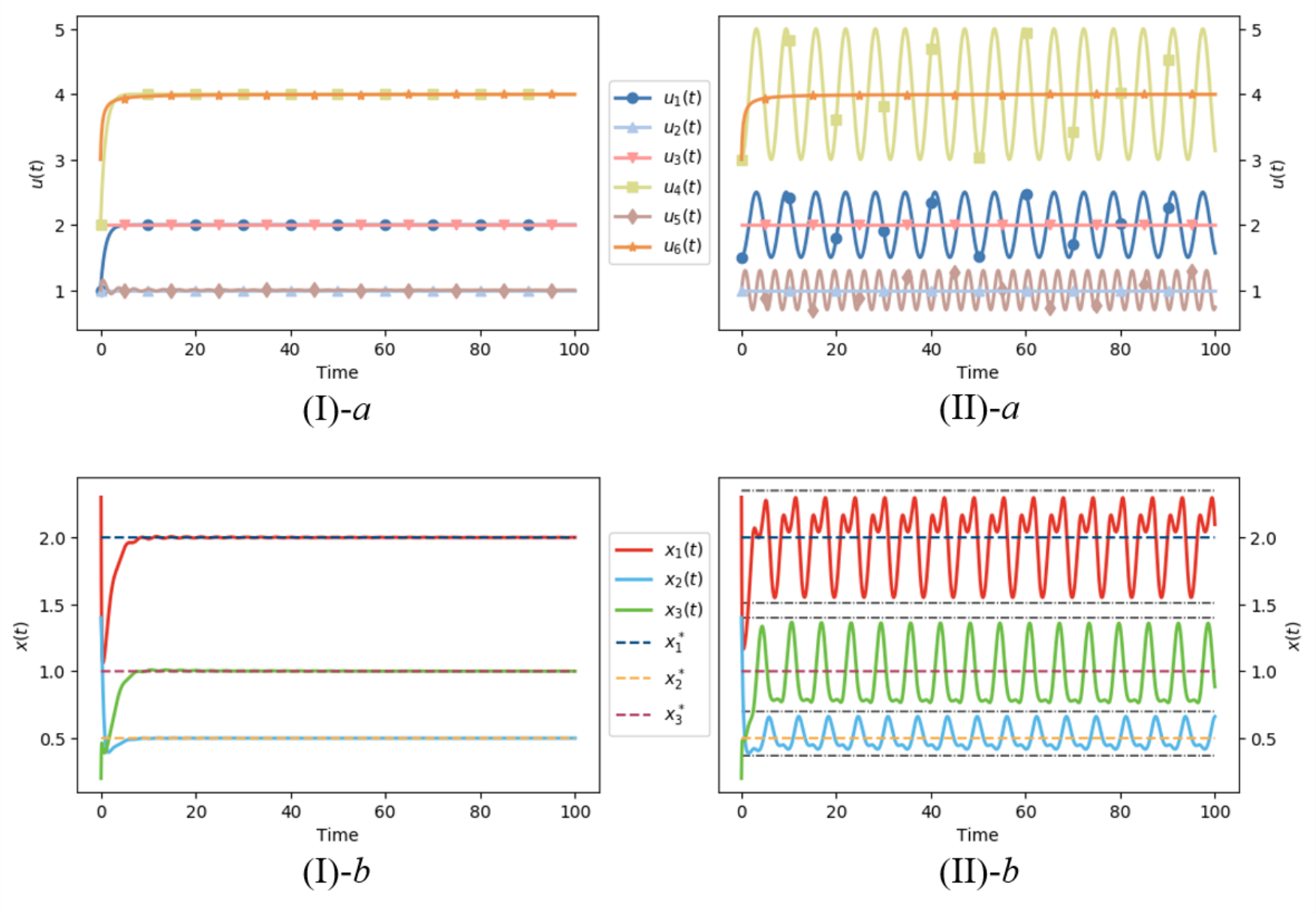}
    \caption{ISS exhibition of (\ref{eq_ex_CB}) with different inputs: (I) $u_1(t)=2-e^{-t}, u_2(t)=1,u_3(t)=2,u_4(t)=4-2e^{-t},u_5(t)=1+\frac{\sin (2t)}{2+7t},u_6(t)=4-\frac{1}{1+3t}$, which converge to $u^*$; (II) $u_1(t)=2-0.5\cos t, u_2(t)=1,u_3(t)=2, u_4(t)=4-\cos t, u_5(t)=1+0.3\sin (2t), u_6(t)=4-\frac{1}{1+3t}$, which oscillate along $u^*$.}
    \label{FigCB}
\end{figure}

\cref{FigBio} presents the numerical simulation results for the system (\ref{eq_supp_ex}) with two types of different inputs starting from the same initial point $x_0=(1.9,1.3,0.2)^{\top}$. Also, for the first kind of input (converging to $u^*$), the state will converge to $x^*$, while for the second kind of input (oscillating along $u^*$), the long-term behavior of the state is controlled by a function of $|u-u^*|$. 

\begin{figure}[ht]
    \centering
    \includegraphics[width=0.99\textwidth]{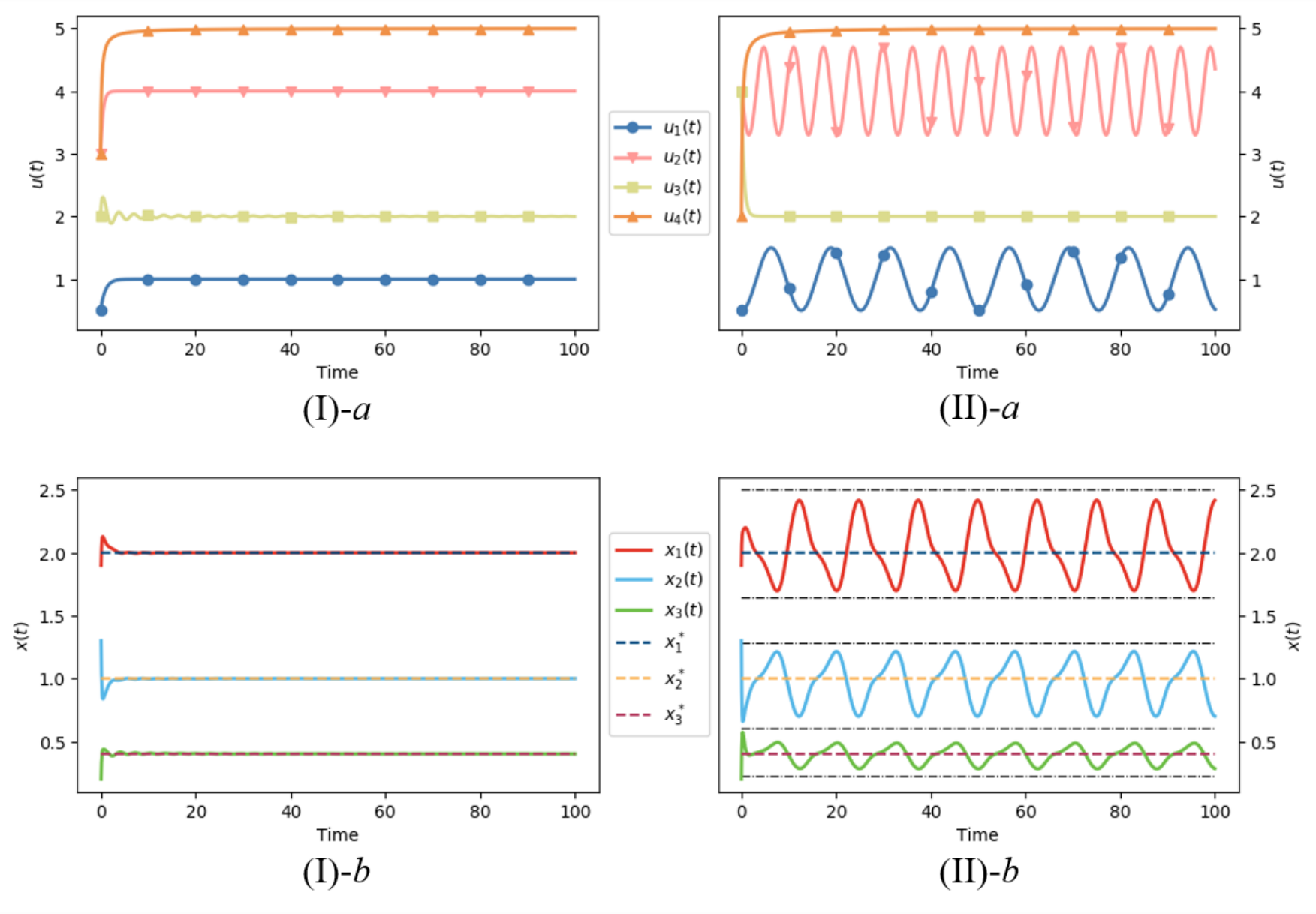}
    \caption{ISS exhibition of (\ref{eq_supp_ex}) with different inputs: (I) $u_1=1-0.5e^{-t}, u_2=4-e^{-2t}, u_3=2+\frac{2\sin(2t)}{2+7t}, u_4=5-\frac{2}{1+5t}$, which converge to $u^*$; (II) $u_1=1-0.5\cos(0.5t), u_2=4-0.7\sin (t), u_3=2+2e^{-2t}, u_4=5-\frac{3}{1+5t}$, which oscillate along $u^*$.}
    \label{FigBio}
\end{figure}

\cref{FigCP} gives the numerical simulation results for the system (\ref{eq_coupled_system}). It can be observed that both $x_1$ and $x_2$ converge to their corresponding concentrations, verifying the effectiveness of parallel molecular computation.

\begin{figure}[ht]
    \centering
    \includegraphics[width=0.99\textwidth]{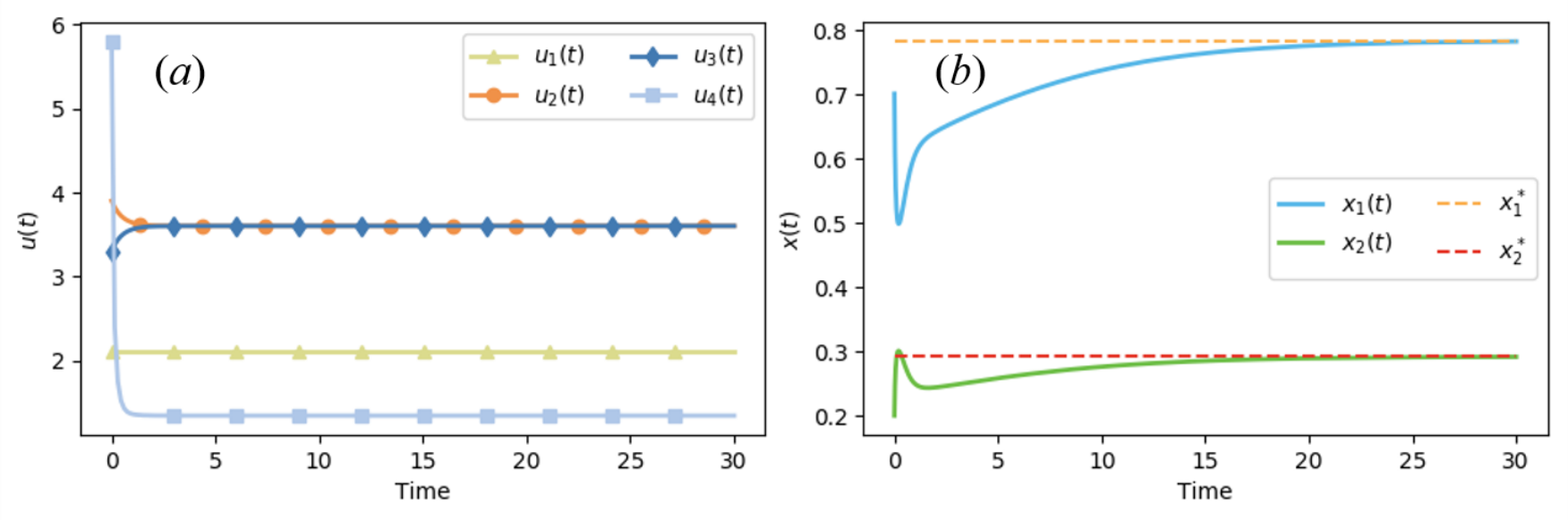}
    \caption{Simulation on the parallel computation result of (\ref{coulped_app}): (a) the evolution of $u(t)$ with $u(0)=(2.1,3.9,3.3,5.8)^{\top}$; (b) the evolution of $x(t)$ with $x(0)=(0.7,0.2)^{\top}$.}
    \label{FigCP}
\end{figure}

\end{document}